		\newlength\mylen
		\newcolumntype{C}{>{\hfil$}p{\mylen}<{$\hfil}}
\DeclareMathOperator{\Perv}{Perv}
\DeclareMathOperator{\mix}{mix}
\DeclareMathOperator{\Parity}{Parity}
\DeclareMathOperator{\C}{\mathbb{C}}
\DeclareMathOperator{\Z}{\mathbb{Z}}
\DeclareMathOperator{\bH}{\mathbb{H}}
\DeclareMathOperator{\sE}{\mathscr{E}}
\DeclareMathOperator{\tA}{\textbf{A}}
\DeclareMathOperator{\tR}{\textbf{R}}
\DeclareMathOperator{\Hom}{Hom}
\DeclareMathOperator{\End}{End}
\DeclareMathOperator{\rank}{rk}
\DeclareMathOperator{\grrk}{rk^{\bullet}}
\DeclareMathOperator{\id}{id}
\DeclareMathOperator{\Sym}{Sym}
\DeclareMathOperator{\Tr}{\mathrm{Tr}}
\DeclareMathOperator{\CH}{\mathrm{CH}}
\DeclareMathOperator{\hH}{\mathrm{H}}
\newtheorem{thm}{Theorem}
\newtheorem{cor}[thm]{Corollary}
\newtheorem{lem}[thm]{Lemma}
\newtheorem{prop}[thm]{Proposition}
\newtheorem*{thm*}{Theorem}
\theoremstyle{remark}
\newtheorem{rem}[thm]{Remark}
\theoremstyle{definition}
\newenvironment{thmp}[1]{
  
  \thmalt
}{\endthmalt}
\begin{document}

\title{The geometry of tilting composition series via Richardson varieties}
\author{Joseph Baine and Chris Hone}
\date{}

\newcommand{\Addresses}{{% additional braces for segregating \footnotesize
  \bigskip
  \footnotesize

  J.~Baine, \textsc{Max-Planck-Institut f\"{u}r Mathematik, Vivatsgasse 7, 53111 Bonn, Germany}\par\nopagebreak
  \textit{E-mail address}: \texttt{baine@mpim-bonn.mpg.de}

  \medskip

  C.~Hone, \textsc{Department of Mathematical Sciences, University of Copenhagen, Denmark}\par\nopagebreak
  \textit{E-mail address}: \texttt{cth@math.ku.dk}
}}

	\maketitle
	\begin{abstract}
		\noindent
		We prove the (graded) Jordan--H\"{o}lder multiplicities of (mixed) tilting sheaves on flag varieties admit a geometric interpretation as the hypercohomology of certain sheaves on Richardson varieties in the Langlands dual flag variety. These sheaves are a motivic variant of geometric extensions, and may be described as a tensor product of parity sheaves on the Schubert and opposite Schubert varieties. We also provide an explicit formula for these multiplicities in terms of $\ell$-Kazhdan--Lusztig polynomials.
	\end{abstract}
	
\section{Introduction}

    Tilting objects in highest weight categories have emerged as the salient objects of interest in geometric representation theory. 
	Understanding their structure is one of the major open problems in the field. 
	For example, determining the structure of tilting sheaves on finite and affine flag varieties would yield: the characters of simple and tilting modules for algebraic groups in positive characteristic \cite{AMRW19, RW21}; decomposition numbers for representations of the symmetric group \cite{Erd94}; characters for simple representations of finite groups of Lie type in equal characteristic \cite{Ste63}; and tensor product formulae for certain semisimple symmetric tensor categories \cite{GK92}.
	\\
	\par 
	Often it is difficult to study tilting objects directly. 
	When studying tilting sheaves on flag varieties, a particularly fertile approach has been to exploit deep equivalences, like modular Koszul duality, to reinterpret structural properties of tilting sheaves in terms of the stalks of parity sheaves on the Langlands dual flag variety \cite{AMRW19}. 
    However, a description of the image of intersection cohomology sheaves under modular Koszul duality is lacking, and until now  a geometric interpretation of the Jordan--H\"{o}lder multiplicities for tilting sheaves was unknown.
	\\
    \par 
	Fix a complex reductive group $G$ and field $\Bbbk$ of characteristic $\ell\geq 0$. Given an indecomposable tilting sheaf $\overline{T}_x$ with coefficients in $\Bbbk$, supported on a Schubert variety $X_x \subseteq G/B$, and the simple intersection cohomology sheaf $\overline{L}_z$ supported on $X_z \subseteq G/B$, one can ask for a natural geometric interpretation of the Jordan--H\"{o}lder multiplicity $[\overline{T}_x : \overline{L}_z]$. 
	The main result of this paper is such an interpretation as the rank of the hypercohomology of a certain sheaf  $\check{\sE_x^z}$ supported on the Richardson variety $\check{X}_x^z$ in the Langlands dual flag variety $\check{G}/\check{B}$. 
	More precisely, we prove:
	
	\begin{thm}
\label{Thm: Ungraded multiplicities}
		Let $\text{char } \Bbbk = \ell$ be either 0 or a good prime for $G$, then 
		\begin{align*}
			[\overline{T}_x: \overline{L}_z] 
			= 
			\rank\bH^{\bullet}(\check{\sE_x^z}). 
		\end{align*}
	\end{thm}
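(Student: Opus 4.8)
The plan is to deduce the statement from a graded refinement, $[\overline{T}_x : \overline{L}_z]_q = \grrk \bH^\bullet(\check{\sE_x^z})$, in the mixed setting, with the Tate twists on the left recorded by $q$, and to prove that refinement by computing each side as the same explicit, manifestly positive expression in $\ell$-Kazhdan--Lusztig polynomials; specialising $q = 1$ then yields the theorem. Modular Koszul duality, under which $\overline{T}_x$ corresponds to a parity complex on $\check{G}/\check{B}$, is the conceptual bridge relating the two sides.

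On the representation-theoretic side, I would begin from the costandard filtration of the indecomposable tilting object $\overline{T}_x$. Passing to the graded Grothendieck group of mixed perverse sheaves on $G/B$ gives
\[
[\overline{T}_x : \overline{L}_z]_q \;=\; \sum_{z\leq w\leq x}(\overline{T}_x : \nabla_w)_q \,[\nabla_w : \overline{L}_z]_q ,
\]
the sum being supported on the Bruhat interval $[z,x]$. The costandard multiplicities $(\overline{T}_x : \nabla_w)_q$ are ($w_0$-twisted) $\ell$-Kazhdan--Lusztig polynomials, by the character formula for indecomposable tilting objects in terms of the $\ell$-canonical basis: Soergel's formula when $\ell = 0$, the Riche--Williamson tilting character formula when $\ell$ is good. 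The decomposition numbers $[\nabla_w : \overline{L}_z]_q$ are obtained by inverting the unitriangular matrix of stalks of the simple objects, and --- this is where the hypothesis on $\ell$ is used --- are again $\ell$-Kazhdan--Lusztig polynomials. Substitution gives the promised closed formula, a sum over $[z,x]$ of products of two $\ell$-Kazhdan--Lusztig polynomials.

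On the geometric side, I would use that the Richardson variety $\check{X}_x^z \subseteq \check{G}/\check{B}$ is, \'etale-locally and up to a smooth factor, a product of a Schubert variety (contributed by the ``$x$-side'') and an opposite Schubert variety (contributed by the ``$z$-side''). The sheaf $\check{\sE_x^z}$ is a motivic substitute for the parity sheaf on $\check{X}_x^z$ --- which need not exist, since Richardson varieties need not be ``even'' --- built by a $j_{!*}$-type recipe in the mixed/motivic framework. Its construction should guarantee (i) that it is sufficiently ``parity-type'' for the spectral sequences computing $\bH^\bullet(\check{\sE_x^z})$ to degenerate, and (ii) that its (co)stalks factor, through the local product structure, as products of two $\ell$-Kazhdan--Lusztig polynomials --- the Schubert factor contributing the tilting-side polynomial, the opposite-Schubert factor the simple-side polynomial. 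Combined with Koszul duality this identifies $\grrk\bH^\bullet(\check{\sE_x^z})$ with the expression of the previous paragraph, and $q = 1$ completes the proof.

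The main obstacle is the construction and control of $\check{\sE_x^z}$: one must manufacture, in the motivic setting, a replacement for a possibly non-existent parity sheaf on a Richardson variety that is rigid enough to have predictable (co)stalks, establish the degeneration in (i), and --- most delicately --- prove that its (co)stalks realise the correct products of $\ell$-Kazhdan--Lusztig polynomials slot by slot, reconciling the ``tilting flavour'' coming from the Schubert factor with the ``simple flavour'' coming from the opposite-Schubert factor. Making the Koszul-dual bridge precise --- localising the correspondence $\kappa(\overline{T}_x) = (\text{parity complex})$ onto the Richardson variety cut out inside $\check{G}/\check{B}$, while tracking the $w_0$-twist, the shifts and Tate twists, and the interchange of $j_!$ and $j_*$ inherent in a geometric extension --- is the technical heart of the argument.
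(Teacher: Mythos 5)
Your overall architecture matches the paper's: prove a graded refinement by computing both sides as expressions in $\ell$-Kazhdan--Lusztig polynomials, use Koszul duality as the bridge, use the Knutson--Woo--Yong local product structure for the stalks of a motivic replacement for the parity sheaf on the Richardson variety, and specialise the grading away (the good-prime hypothesis is needed exactly there, for the Achar--Riche degrading functor, not where you invoke it). However, two steps of your sketch have genuine problems. First, on the algebraic side you assert that $[\nabla_w:\overline{L}_z]_q$ is ``obtained by inverting the unitriangular matrix of stalks of the simple objects'' and is ``again'' an $\ell$-KL polynomial. In positive characteristic the stalks of the simple (modular IC) perverse sheaves are \emph{not} $\ell$-KL polynomials --- $\ell$-KL polynomials record stalks of \emph{parity} sheaves --- and they are not known in general, even at good primes; nor is the inverse of that matrix an $\ell$-KL matrix for the same group. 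The paper instead gets these multiplicities by graded BGG reciprocity plus Ringel duality, $[\nabla_y:L_z\langle i\rangle]=(P_z\langle i\rangle:\Delta_y)=(T_{w_0z}\langle i\rangle:\nabla_{w_0y})$, and only then applies Koszul duality, so that the answer is ${}^{\ell}\check{h}_{w_0y,w_0z}$, an $\ell$-KL polynomial \emph{for the Langlands dual group}; this chain works for all $\ell$. Your route through ``stalks of simples'' would simply fail modularly, and it also obscures the $v\mapsto v^{-1}$ asymmetry in the resulting formula $\sum_y {}^{\ell}\check{h}_{w_0y,w_0z}(v)\,{}^{\ell}\check{h}_{y,x}(v^{-1})$, which matters later.

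Second, on the geometric side the passage from stalks of $\check{\sE_x^z}$ to $\grrk\bH^{\bullet}(\check{\sE_x^z})$ is not just a degeneration statement that the construction ``should guarantee.'' Summing stalks over the stratification by open Richardson varieties weights each stratum $(X^y_{y'})^{\circ}$ by its size, which introduces the renormalised $R$-polynomial $r_{y,y'}$, so what one gets naturally is the triple sum $\sum_{z\le y\le y'\le x}{}^{\ell}h_{w_0y,w_0z}\,r_{y,y'}\,{}^{\ell}h_{y',x}$; identifying this with the double sum from the algebraic side requires a nontrivial Hecke-algebra identity coming from self-duality of the parity element (the paper's Lemma~\ref{Lem: r pol identity} and Proposition~\ref{prop:the polynomial formula}). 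Moreover, the paper does not prove degeneration of a stratification spectral sequence directly: it runs a Grothendieck--Lefschetz trace argument, and to convert the trace into the Poincar\'e polynomial it needs pointwise purity, purity of $\bH^{\bullet}$ (a summand of the cohomology of a smooth proper resolution), the single-parity property of $\ell$-KL polynomials together with nonvanishing of $\bH^0$ to fix evenness, and a small linear-algebra lemma. These ingredients, together with the $r$-polynomial identity, are the actual content of the comparison; your sketch leaves them as an unexamined item (i). (Also, describing $\check{\sE_x^z}$ as a $j_{!*}$-type object is misleading: it is constructed as the canonical summand of $f_*\mathbf{1}$ for a resolution $f$ cut out by a minimal idempotent in the image of the cycle class map, which is what makes the locality, homotopy-invariance, and K\"unneth properties of its stalks provable.)
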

	
	This sheaf $\check{\sE_x ^z}$ is a motivic variant of the geometric extension recently defined by Williamson and the second author in \cite{HW23} and independently by McNamara in \cite{McN18}. 
    Even in characteristic 0, where the geometric extension $\check{\sE_x^z}$ is the intersection cohomology sheaf supported on $\check{X}_x^z$, this result  is new. 
	\\
	\par
	This result is deduced from a stronger statement in the context of mixed tilting sheaves. 
	Since the seminal work of Beilinson, Ginzburg, and Soergel in \cite{BGS96}, it has been known that\ equivalences like Koszul duality are only visible when working with $\Z$-graded categories.
	In characteristic zero, the $\mathbb{Z}$-graded versions of these categories were constructed using various theories of weights \cite[\S4]{BGS96}. 
    Recently Achar and Riche have constructed $\Z$-graded categories of mixed sheaves with coefficients in a field $\Bbbk$ of characteristic $\ell \geq 0$ \cite{AR16}.
	These categories have analogous indecomposable mixed tilting sheaves $T_x$ and simple mixed intersection cohomology sheaves $L_x \langle i \rangle$, where $i \in \Z$ . 
	The stronger and mixed version of Theorem \ref{Thm: Ungraded multiplicities} is the following: 
	
	\begin{thm}\label{thm:main thm}
		Let $\text{char } \Bbbk = \ell\geq 0$. For any $\ell$, we have 
		\begin{align*}
			[T_x: L_z \langle i \rangle] 
			= 
			\rank \bH^{i}(\check{\sE_x^z}). 
		\end{align*}
	\end{thm}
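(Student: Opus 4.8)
The plan is to compute both sides as the same bilinear expression in ($\ell$-)Kazhdan--Lusztig polynomials, indexed by the Richardson interval $[z,x]=\{w : z\le w\le x\}$, and then to match them.

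\textbf{The left-hand side.} Since $T_x$ carries both a $\Delta$-flag and a $\nabla$-flag, the graded Jordan--H\"older multiplicities factor:
\[
\sum_i [T_x : L_z\langle i\rangle]\,v^i \;=\; \sum_w\,(T_x : \Delta_w)_v\cdot[\Delta_w : L_z]_v,
\]
where $(\,\cdot\,)_v$ and $[\,\cdot\,]_v$ record graded multiplicities as Laurent polynomials in $v$; the first factor vanishes unless $w\le x$ and the second unless $z\le w$. For the first factor one invokes modular Koszul duality (Achar--Riche): the equivalence $D^{\mathrm{mix}}_{(B)}(G/B,\Bbbk)\simeq D^{\mathrm{mix}}_{(\check B)}(\check G/\check B,\Bbbk)$ carries $T_x$ to the parity complex on the Schubert variety $\check X_x$ and standards/costandards to standards/costandards, so $(T_x : \Delta_w)_v$ is read off from the stalks of that parity complex and is an $\ell$-Kazhdan--Lusztig polynomial (up to the relabelling $w\mapsto w^{-1}$ and Tate twists, which I suppress). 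The second factor $[\Delta_w : L_z]_v$ is, by construction of the mixed category, an (inverse) $\ell$-Kazhdan--Lusztig polynomial. Hence the left-hand side is a sum over $z\le w\le x$ of products of two $\ell$-Kazhdan--Lusztig polynomials --- the explicit formula announced in the introduction.

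\textbf{The right-hand side.} One computes $\sum_i \rank\bH^i(\check{\sE_x^z})\,v^i$ from the geometry of $\check X_x^z$. The torus-fixed points of $\check X_x^z$ are exactly the $w$ with $z\le w\le x$, and near such a $w$ the Richardson variety is, \'etale-locally, the product of a transverse slice to $\check C_w$ in $\check X_x$ with a transverse slice to $\check C^w$ in $\check Y^z$. Since the geometric extension of a product is the external product of geometric extensions, the stalk (and costalk) of $\check{\sE_x^z}$ at $w$ is the product of the corresponding stalk of the geometric extension of $\check X_x$ at $w$ with that of $\check Y^z$ at $w$ --- i.e.\ of two $\ell$-Kazhdan--Lusztig polynomials, the same kind of factors as on the left. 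To assemble the stalks into the hypercohomology I would use a Bott--Samelson-type resolution $\pi\colon Z\to\check X_x^z$ with $Z$ an iterated $\mathbb{P}^1$-bundle: $\pi_*\underline{\Bbbk}_Z$ is pure, $\check{\sE_x^z}$ is a direct summand of it, the remaining summands are (shifts of) smaller geometric extensions $\check{\sE_u^v}$ with multiplicities governed by the same $\ell$-Kazhdan--Lusztig combinatorics, and $\bH^\bullet(Z)$ is computed explicitly; solving the resulting recursion (equivalently, using $T$-equivariant localisation, $\check{\sE_x^z}$ being equivariantly formal) yields the same bilinear expression in $\ell$-Kazhdan--Lusztig polynomials as on the left. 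Comparing the two computations proves Theorem~\ref{thm:main thm}; Theorem~\ref{Thm: Ungraded multiplicities} then follows by forgetting the grading and noting that in good characteristic $\check{\sE_x^z}$ is the intersection cohomology complex of $\check X_x^z$, so that $\rank\bH^\bullet(\check{\sE_x^z})=\dim\mathrm{IH}^\bullet(\check X_x^z)$.

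\textbf{Main obstacle.} I expect the crux to be the right-hand side and the comparison: because the decomposition theorem may fail (in bad characteristic), one cannot quote intersection-cohomology stalk formulas and must work throughout with the geometric-extension formalism of \cite{HW23} --- establishing the external-product behaviour, the purity and summand decomposition of Bott--Samelson pushforwards, and the equivariant formality in that generality. Most delicate of all is matching the cohomological/motivic grading carried by $\check{\sE_x^z}$ with the Jordan--H\"older grading $\langle i\rangle$, which requires pinning down every Tate twist and the precise Koszul-duality normalisation so that the two $\ell$-Kazhdan--Lusztig expressions agree on the nose --- in particular, identifying exactly which $\ell$-Kazhdan--Lusztig polynomial (ordinary or inverse, and with which $w_0$-twist) occurs in each factor.
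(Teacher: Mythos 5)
Your treatment of the left-hand side is essentially the paper's: a (co)standard filtration of $T_x$, with the two factors identified as $\ell$-Kazhdan--Lusztig polynomials via Koszul duality --- though note that the identification of $[\Delta_w : L_z\langle i\rangle]$ is not ``by construction of the mixed category'': in the Achar--Riche setting it requires graded BGG reciprocity \emph{and} Ringel duality (so that both factors become tilting-to-costandard multiplicities, which is what Koszul duality actually converts into parity stalks), and the resulting polynomial is an ordinary $\ell$-KL polynomial with $w_0$-twisted indices, ${}^{\ell}\check h_{w_0 w, w_0 z}$, not an inverse one. Your stalk analysis on the Richardson variety (Knutson--Woo--Yong local structure plus a K\"unneth property of geometric extensions) also matches the paper.

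The genuine gap is the step assembling stalks into hypercohomology. You posit a Bott--Samelson-type resolution $\pi\colon Z\to \check X_x^z$ by an iterated $\mathbb{P}^1$-bundle and a decomposition of $\pi_*\underline{\Bbbk}_Z$ into $\check{\sE_x^z}$ plus shifts of smaller geometric extensions ``with multiplicities governed by the same $\ell$-KL combinatorics''; no such resolution exists for Richardson varieties in general, and, more seriously, such a decomposition is precisely a decomposition-theorem/parity-type statement that is unavailable here: the strata of the Richardson stratification are open Richardson varieties, which are not affine spaces, have odd-degree cohomology and nontrivial fundamental groups, so the parity-sheaf mechanism that yields such decompositions and equivariant formality on Schubert varieties does not apply (the paper stresses exactly this point), and equivariant formality of the modular geometric extension is not established. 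The paper circumvents any decomposition of a pushforward: it replaces the geometric extension by a motivic (numerical Chow-idempotent) variant so as to get Zariski-locality and pointwise purity with integer-power-of-$q$ Frobenius eigenvalues, proves purity of $\bH^\bullet$ as a summand of the cohomology of a smooth proper resolution, and then applies the Grothendieck--Lefschetz trace formula: summing the (constant-on-strata) stalk traces against the point counts $|(X^y_{y'})^{\circ}(\mathbb{F}_q)| = R_{y,y'}(q)$ produces $\sum_{z\le y\le y'\le x} {}^{\ell}h_{w_0y,w_0z}\, r_{y,y'}\, {}^{\ell}h_{y',x}$, and a Hecke-algebra identity (bar-invariance of $\sum_y {}^{\ell}\check h_{y,x}\delta_y$, i.e.\ Verdier self-duality of the parity sheaf, which introduces the $r$-polynomials) together with a purity/parity linear-algebra lemma identifies this trace with $\grrk \bH^\bullet$ and with the left-hand side. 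Your proposal contains no substitute for this purity-plus-point-counting mechanism (the $R$-polynomials never appear), so the passage from stalks to $\bH^{i}$ is unsupported; also, your closing remark that in good positive characteristic $\check{\sE_x^z}$ is the intersection cohomology complex is incorrect (that holds in characteristic $0$), though it is inessential since the ungraded statement follows from the graded one via the degrading functor.
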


	\par
    Further, we determine an explicit formula for the graded rank of the hypercohomology of the geometric extension $\sE_x^z$ on the Richardson variety $X_x^z$ in a flag variety $G/B$. 
	This generalises the main result of Dyer and Lusztig in \cite{DL23} from characteristic 0 to positive characteristic.	
	\begin{thm}\label{thm:geometric ranks}
		For any characteristic $\ell \geq 0$, the graded rank of $\bH^{\bullet}(\sE_x^z)$ is given by:
		\begin{align*}
			\grrk \bH^{\bullet}(\sE_x^z) 
			= 
			\sum_{z \leq y \leq y' \leq x} {}^{\ell} h_{y',x}  ~ r_{y,y'} ~ {}^{\ell} h_{w_0 y, w_0 z}
			= 
			\sum_{z \leq y \leq x} {}^{\ell} h_{y,x} ~\overline{{}^{\ell} h_{w_0 y, w_0 z}} ,
		\end{align*} 
        where ${}^{\ell} h_{y, x}$ is an $\ell$-Kazhdan--Lusztig polynomial, and $r_{y,y'}$ is a renormalisation of the Kazhdan--Lusztig $R$-polynomial. In particular, the cohomology $\bH^{\bullet}(\sE_x^z)$ is supported in a single parity.
	\end{thm}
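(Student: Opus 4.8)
The plan is to derive the formula from Theorem~\ref{thm:main thm} by a character computation in the graded highest weight category of Achar--Riche, and then rewrite the outcome combinatorially. First, since the right-hand side of the desired identity depends only on the Weyl group $W$, which $G$ and $\check{G}$ share, I would apply Theorem~\ref{thm:main thm} to the reductive group $\check{G}$ (whose Langlands dual is $G$) to obtain
\begin{equation*}
\grrk\bH^{\bullet}(\sE_x^z) \;=\; \sum_{i}[T_x : L_z\langle i\rangle]\, v^{i},
\end{equation*}
the graded Jordan--H\"older polynomial of the indecomposable mixed tilting sheaf $T_x$. Everything then reduces to evaluating this polynomial.

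Second, I would compute it through the costandard filtration. In a graded highest weight category there are costandard objects $\nabla_y$ with
\begin{equation*}
\sum_{i}[T_x : L_z\langle i\rangle]\, v^{i} \;=\; \sum_{y}\Bigl(\sum_{a}(T_x : \nabla_y\langle a\rangle)\, v^{a}\Bigr)\Bigl(\sum_{b}(\nabla_y : L_z\langle b\rangle)\, v^{b}\Bigr),
\end{equation*}
and I claim the two inner sums are given by $\ell$-Kazhdan--Lusztig data. The first is the character of the indecomposable mixed tilting sheaf, $\sum_a (T_x : \nabla_y\langle a\rangle)\, v^a = \overline{{}^{\ell}h_{y,x}}$; this is the modular mixed form of Soergel's tilting character formula, reflecting (via modular Koszul duality) that $T_x$ corresponds to a parity sheaf whose graded standard multiplicities are the $\ell$-Kazhdan--Lusztig polynomials, and equivalently arising from mixed Ringel duality applied to the reciprocity formula for $\nabla$-multiplicities of projectives. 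The second inner sum records the graded composition multiplicities of a mixed costandard object, $\sum_b (\nabla_y : L_z\langle b\rangle)\, v^b = {}^{\ell}h_{w_0 y, w_0 z}$; it is the modular mixed avatar of the classical fact that Verma composition multiplicities are given by inverse Kazhdan--Lusztig polynomials, i.e.\ by $w_0$-twisted Kazhdan--Lusztig polynomials. Substituting these gives
\begin{equation*}
\grrk\bH^{\bullet}(\sE_x^z) \;=\; \sum_{z\le y\le x}{}^{\ell}h_{w_0 y, w_0 z}\;\overline{{}^{\ell}h_{y,x}},
\end{equation*}
which is the last expression in the theorem.

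Third, I would pass to the middle expression using the renormalised $R$-polynomials $r_{y,y'}$. Bar-invariance of the $\ell$-canonical basis element ${}^{\ell}b_x = \sum_y {}^{\ell}h_{y,x}\,\delta_y$ in the Hecke algebra, combined with the expansion of $\overline{\delta_y}$ in the standard basis, yields $\overline{{}^{\ell}h_{y,x}} = \sum_{y\le y'\le x} r_{y,y'}\,{}^{\ell}h_{y',x}$, and inserting this produces the three-term sum $\sum_{z\le y\le y'\le x}{}^{\ell}h_{w_0 y, w_0 z}\, r_{y,y'}\,{}^{\ell}h_{y',x}$. The single-parity statement is then immediate: each $\ell$-Kazhdan--Lusztig polynomial ${}^{\ell}h_{a,b}$, being (a normalisation of) a graded stalk of a parity sheaf, is supported in degrees of one fixed parity $\equiv \ell(b)-\ell(a)\pmod{2}$, as is its bar, so every summand ${}^{\ell}h_{w_0 y, w_0 z}\,\overline{{}^{\ell}h_{y,x}}$ is supported in degrees $\equiv\bigl(\ell(w_0 z)-\ell(w_0 y)\bigr)+\bigl(\ell(x)-\ell(y)\bigr)=\ell(x)-\ell(z)\pmod{2}$, independently of $y$; hence $\bH^{\bullet}(\sE_x^z)$ is concentrated in a single parity.

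The hard part will be the second step: establishing the two character formulas in the modular mixed setting with compatible normalisations. The tilting character rests on the graded highest weight structure of Achar--Riche (existence and self-duality of indecomposable mixed tilting sheaves, and the identification of their standard multiplicities with ${}^{\ell}h$), while the costandard composition multiplicities are the more delicate point --- the modular mixed analogue of the inverse Kazhdan--Lusztig formula --- and require either modular Koszul duality or a direct study of mixed costandard objects; much of this should already be available once Theorem~\ref{thm:main thm} is proved, after which only Hecke-algebra bookkeeping remains. One could alternatively give a geometric derivation of the three-term formula by stratifying the relevant Richardson variety into Richardson cells and computing the (parity-degenerate) stratification spectral sequence, at the expense of tracking signs in the $R$-polynomial factor.
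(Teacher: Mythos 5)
Your argument is circular as a proof of this theorem. Your very first step invokes Theorem~\ref{thm:main thm} to identify $\grrk\bH^{\bullet}(\sE_x^z)$ with the graded Jordan--H\"older polynomial $\sum_i [T_x:L_z\langle i\rangle]v^i$, but Theorem~\ref{thm:main thm} is not an independent input: it is itself obtained by comparing the representation-theoretic formula of Proposition~\ref{prop:the polynomial formula} with exactly the geometric computation asserted here (Theorem~\ref{Thm:geo argument}). There is no known equivalence or direct construction relating mixed tilting multiplicities to hypercohomology of sheaves on Richardson varieties that would let you use Theorem~\ref{thm:main thm} first; the identification of the two sides is precisely what has to be proved. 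Consequently, everything after your first step (the costandard filtration, the tilting character formula $\sum_a(T_x:\nabla_y\langle a\rangle)v^a=\overline{{}^{\ell}h_{y,x}}$ via Koszul/Ringel duality, the costandard composition multiplicities via BGG reciprocity, and the bar-invariance manipulation producing the $r_{y,y'}$-sum) only reproduces the purely combinatorial half of the paper (Propositions~\ref{Prop: Tilting JH mult} and~\ref{prop:the polynomial formula} and Corollary~\ref{cor:numerical duality}); it never touches the object $\sE_x^z$, so the quantity $\grrk\bH^{\bullet}(\sE_x^z)$ is never actually computed.

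The missing content is the geometric half: one must construct the motivic geometric extension on the Richardson variety and establish the properties of Proposition~\ref{Prop:geo properties} --- Zariski locality and $\mathbb{A}^n$-invariance of the construction, the K\"unneth property, and hence, via the Knutson--Woo--Yong local structure (Proposition~\ref{Richardson_link}), the identification of stalks on each open Richardson stratum with tensor products of parity-sheaf stalks, together with pointwise purity and purity of $\bH^{\bullet}$. Only then does the Grothendieck--Lefschetz trace formula convert the stratumwise data (point counts $r_{y,y'}$ times the stalk traces ${}^{\ell}h$) into $\grrk\bH^{\bullet}(\sE_x^z)$, with Lemma~\ref{lem:lin_alg} and the parity/symmetry observations (which do come from the combinatorial side) upgrading the trace identity to the Poincar\'e polynomial statement and the single-parity claim. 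Your closing remark that one could instead run a stratification spectral sequence concedes this point but does not supply it: controlling that spectral sequence requires exactly the stalk computation and purity statements above, which are the substance of the proof.
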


    Using modular Koszul duality we may categorify the equality of Theorem \ref{thm:main thm} for the special case of Schubert varieties in Section \ref{Ssec: Isomorphisms}. We also provide an explicit sheaf theoretic description of the geometric extension, providing a geometric interpretation of the numerical equality in Theorem \ref{thm:geometric ranks}.
    \par 
    \begin{thm}
\label{Thm: Tensor product}
    The geometric extension on the Richardson variety is the tensor product \[\mathscr{E}^z_x\cong \mathscr{E}_x\otimes \mathscr{E}^z\] where $\mathscr{E}_x$ and $\mathscr{E}^z$ denote the dense indecomposable parity sheaves on the Schubert and opposite Schubert varieties respectively.
    \end{thm}
    
    We prove Theorem \ref{thm:main thm} by computing this multiplicity in two ways. In Section \ref{Sec: JH mult} we exploit various equivalences associated to highest weight categories to determine an explicit formula for $[T_x: L_z \langle i \rangle]$ as a sum of $\ell$-Kazhdan--Lusztig polynomials. 
	The terms in this sum are naturally indexed by strata in the Richardson variety $\check{X}^z_x$, and the $\ell$-Kazhdan--Lusztig polynomials occurring are exactly those predicted by the local model of the Richardson singularities given by Knutson, Woo, and Yong \cite{KWY13}. 
	This suggests interpreting both sides as the (polynomial) trace of Frobenius on a mixed $\ell$-adic sheaf on $\check{X}^z_x$ which has the correct stalks locally. We prove Theorem \ref{thm:main thm} via this strategy in Section \ref{Subsec: geometric argument}, assuming the existence of the sheaves $\check{\mathscr{E}}^z_x$.
    In Section \ref{subsection:geometric extensions on rich} we construct the required sheaves, which are a motivic variant of the geometric extensions introduced in \cite{HW23,McN18}, and prove their alternate description as a tensor product in Section \ref{Ssec: Tensor product}.
    
    \par 
	\emph{Acknowledgments:}
    The first author was partially supported by ARC grant DP220102861. 
    He thanks the University of New South Wales, and the Max Planck Institute of Mathematics in Bonn for their hospitality. 
	The second author was supported by the Danish National Research Foundation through the Copenhagen Centre for Geometry and Topology (DNRF151).
    The authors thank Sam Jeralds for bringing \cite{KLS14} to our attention.

\section{Jordan--H\"{o}lder multiplicities of tilting sheaves}
\label{Sec: JH mult}
	In this section we determine a formula for the (graded) Jordan--H\"{o}lder multiplicities of (mixed) tilting sheaves on flag varieties in terms of $\ell$-Kazhdan--Lusztig polynomials. 
	The formula follows from various dualities associated to highest weight and Hecke categories. 

\subsection{Recollections on mixed tilting sheaves}
\label{Subsec: tilting sheaves}

	Throughout this section $G$ denotes a complex reductive algebraic group, with fixed Borel subgroup $B$, fixed maximal torus $T \subset B$, and Weyl group $W$. 
	The choice of Borel endows $W$ with a distinguished set $S$ of generators, such that $(W,S)$ is a Coxeter system with length function $\vert \, \cdot \, \vert : W \rightarrow \Z_{\geq 0}$. 
	The flag variety $G/B$ admits a Schubert stratification by $B$-orbits:
	\begin{align*}
		G/B = \bigsqcup_{x \in W} B xB/B.
	\end{align*}
	Each Schubert cell $B xB/B$ is isomorphic to the affine space $\C^{|x|}$, and the Schubert variety $X_x$ is the closure of the Schubert cell $ B xB/B$. 
	For later use, we define the opposite Borel subgroup $B^-$ as the unique Borel subgroup satisfying $B \cap B^- = T$; the opposite Schubert variety $X^x$ is the closure of the $B^-$-orbit $B^-  x B/B$. 
	\\
	\par 
	Fix a field\footnote{
    More generally, $\Bbbk$ can be a complete local ring whose residue field has characteristic $\ell \geq 0$. 
    In this case $D_{(B)}^b (G/B , \Bbbk)$ and $\overline{\Perv}(G/B ,\Bbbk)$ are defined, although the latter is not a highest weight category. 
    One still has sheaves $\overline{L}_x$, $\overline{\Delta}_x$, $\overline{\nabla}_x$, and $\overline{T}_x$, see \cite{Jut09,AR16a}, and the stalks we consider agree with their modular reductions.} $\Bbbk$ of characteristic $\ell \geq 0$.
	Denote by $D_{(B)}^b (G/B , \Bbbk)$ the bounded derived category of sheaves of $\Bbbk$-modules on $G/B$ that are constructible with respect to the Schubert stratification.  Let $(1)$ denote its (triangulated) shift functor\footnote{This notation is standard when considering mixed Hecke categories as the notation $[1]$ is reserved for the triangulated shift functor on the bounded homotopy category of parity sheaves.}.
	This category has a perverse $t$-structure, whose heart we denote by $\overline{\Perv}(G/B , \Bbbk)$. 
	The category $\overline{\Perv}(G/B ,\Bbbk)$ is a finite highest weight category, so has simple $L_x$, standard $\Delta_x$, and costandard $\nabla_x$ objects indexed by elements of $W$. In particular, if $\mathbf{1}_{BxB/B}$ denotes the constant sheaf on $BxB/B$ and $i_x: BxB/B \hookrightarrow G/B$ the inclusion, then for each $x \in W$, these distinguished objects are given by the following perverse sheaves:  
	\begin{align*}
		\overline{\Delta}_x := i_{x!} \mathbf{1}_{BxB/B} (|x|), 
        &&
        \overline{L}_x := i_{x!*} \mathbf{1}_{BxB/B} (|x|)=\text{IC}(X_x,\Bbbk),
		&&
		\overline{\nabla}_x := i_{x*} \mathbf{1}_{BxB/B} (|x|).
	\end{align*}
    In any highest weight category, we have another distinguished class of objects indexed by elements of the underlying poset, the (indecomposable) tilting objects. 
    Recall that an object in a highest weight category is called tilting if it admits a $\Delta$-filtration (a filtration where successive subquotients are isomorphic to standard objects), and a  $\nabla$-filtration (defined analogously). 
    The axioms of a highest weight category guarantee the existence of enough tilting objects; in our setting, there is a unique indecomposable tilting sheaf $\overline{T}_x$ supported on $X_x$; and these exhaust all isomorphism classes of indecomposable tilting sheaves. 
    \\
    \par 
	In sharp contrast to the simple, standard, and costandard sheaves, there is not any simple geometric construction of tilting sheaves on the flag variety.
	To understand tilting sheaves geometrically, one may pass through modular Koszul duality of \cite{AR16, AMRW19}, and use parity sheaves on the Langlands dual flag variety.
    \\
	\par 
	A $B$-constructible parity sheaf $\mathcal{E}$ on $G/B$, as in \cite{JMW14}, is a direct sum of sheaves $\mathcal{F}$ satisfying: both $\mathcal{H}^n(i_x^!\mathcal{F})$ and $\mathcal{H}^n(i_x^*\mathcal{F})$ are zero for all $n \in \Z$ of some parity (either odd or even), and isomorphic to a free $\Bbbk$-local system otherwise.
	The indecomposable parity sheaves $\mathscr{E}_x(i)$ are indexed by $x\in W$ and $i \in \Z$, as
    shown in \cite[\S4.1]{JMW14}. 
    The parity sheaf $\mathscr{E}_x$ is supported in $X_x$, and is Verdier self-dual. 
     The full subcategory of parity sheaves is denoted $\Parity(G/B, \Bbbk)$.
	Given a parity sheaf $\mathscr{E}$ we denote its stalk at $yB/B$ by $i_y^*\mathscr{E}$.
	The stalks of parity sheaves are encoded by $\ell$-Kazhdan--Lusztig polynomials ${}^{\ell} h_{y,x}$.
	In particular, we define
	\begin{align}
\label{Eqn: lKL polynomials}
		{}^{\ell} h_{y,x}(v) 
		:=
		\sum_{i \in \Z}  \rank_{\Bbbk} H^{i} (i_y^*\mathscr{E}_{x}) v^{-|y| - i},
	\end{align}
	where $\rank_{\Bbbk}$ denotes the rank as a $\Bbbk$-module. 
    \begin{rem}
     Parity sheaves exist only in restricted situations, but on Schubert varieties $X_x$, the indecomposable parity sheaf is isomorphic to the geometric extension of \cite{HW23}. Later we consider Richardson varieties, where the formalism of parity sheaves is not applicable. 
    In that setting, geometric extensions provide the necessary replacement.
    \end{rem}

	\par 
	Following Achar and Riche \cite{AR16}, we consider $D^{\mix}(G/B, \Bbbk) := K^b(\Parity(G/B, \Bbbk))$ the bounded homotopy category of parity sheaves.
	Denote its homological shift functor by $[1]$. This category admits a canonical perverse $t$-structure, whose heart we denote by $\Perv(G/B, \Bbbk)$. 
	The category $\Perv(G/B, \Bbbk)$ is a graded highest weight category with grading shift functor $\langle 1 \rangle := [1](-1)$.	The isomorphism classes of simple $L_x \langle i \rangle$, standard $\Delta_x \langle i \rangle$, costandard $\nabla_x \langle i \rangle$, and indecomposable tilting $T_x \langle i \rangle$ sheaves are parameterised by $x \in W$ and $i \in \Z$. The category $\Perv(G/B, \Bbbk)$ is a $\Z$-graded enhancement of $\overline{\Perv}(G/B,\Bbbk)$; this shall be discussed further in section \ref{subsec: ungraded JH}.
	\\
	\par 
	Given a sheaf $\mathcal{F}$ admitting a $\Delta$-filtration, we write $(\mathcal{F}: \Delta \langle i \rangle)$ for the number of subquotients isomorphic to $\Delta \langle i \rangle$ in any $\Delta$-filtration of $\mathcal{F}$; if $\mathcal{F}$ admits a $\nabla$-filtration $(\mathcal{F}: \nabla \langle i \rangle)$ is defined analogously. 
	These multiplicities are independent of the choice of filtration.
	Since $\Perv(G/B, \Bbbk)$ is a finite highest weight category, it has enough projectives, and any projective object has a $\Delta$-filtration\footnote{
    Although $\Perv(G/B, \Bbbk)$ is not a highest weight category when $\Bbbk$ is a complete local ring, the existence and properties of indecomposable tilting sheaves and projective covers still hold, see \cite[\S3.3]{AR16}. 
    } \cite[\S A]{AR16}. 
	The projective cover of $L_x$ is denoted $P_x$.  
	Graded BGG reciprocity states
	\begin{align}
\label{Eqn: BGG}
		[\nabla_y : L_z \langle i \rangle] = (P_z \langle i \rangle : \Delta_y).
	\end{align}
	Moreover, the category $D^{\mix}(G/B, \Bbbk)$ admits a covariant auto-equivalence $\tR$; called Ringel duality, which takes costandard objects to standard objects and tilting objects to projective objects \cite[\S4]{AR16}. 
	More precisely, if $w_0$ denotes the longest element of $W$ then $\tR(\nabla_{x}) \cong \Delta_{w_0 x}$ and $\tR(T_{x}) \cong P_{w_0 x}$. 
	As a consequence we obtain another description of this multiplicity:
	\begin{align}
\label{Eqn: Ringel}
		(P_z \langle i \rangle : \Delta_y) = (T_{w_0 z} \langle i \rangle : \nabla_{w_0 y}).
	\end{align}

    The multiplicities $(T_x : \nabla_y \langle j \rangle)$ have a geometric interpretation. 
	Let $\check{G}$ denote the complex reductive group that is Langlands dual to $G$,
	and $\check{\mathscr{E}}_x$ and $\check{\nabla}_x$ the corresponding parity and costandard sheaves on the Langlands dual flag variety $\check{G}/ \check{B}$. 
    Modular Koszul duality of \cite{AR16,AMRW19,RV24} is a triangulated equivalence 
    \begin{align*}
    	\kappa: D^{\mix}(G/B, \Bbbk) \tilde{\longrightarrow} D^{\mix}(\check{G}/\check{B}, \Bbbk)
    \end{align*}
    satisfying, among other things, $\kappa(T_x \langle i\rangle ) \cong \check{\mathscr{E}}_x(i)$ and $\kappa(\nabla_x \langle i\rangle ) \cong \check{\nabla}_x(i) $. 
    If we define ${}^{\ell}\check{h}_{y,x}$ for $\check{G}$, as in Equation (\ref{Eqn: lKL polynomials}), then modular Koszul duality implies 
    \begin{align}
\label{Eqn: Koszul}
		{}^{\ell} \check{h}_{y,x}
		=
		\sum_{i \in \Z} (T_x : \nabla_y \langle i \rangle) v^i.
	\end{align}

\subsection{Graded Jordan--H\"{o}lder multiplicities}
	
	We now determine a formula for the graded Jordan--H\"{o}lder multiplicities of tilting sheaves in terms of $\ell$-Kazhdan--Lusztig polynomials for the Langlands dual group.  
	
	\begin{prop}
\label{Prop: Tilting JH mult}
		The following identity holds
		\begin{align*}
			\sum_{i \in \Z} ~[T_x : L_z \langle i \rangle] \, v^i
			=
			\sum_{z \leq y \leq x} {}^{\ell} \check{h}_{w_0 y, w_0 z}(v) ~{}^{\ell} \check{h}_{y,x}(v^{-1})
		\end{align*}
		where ${}^\ell \check{h}_{y,x}$ denotes the $\ell$-Kazhdan--Lusztig polynomial for the Langlands dual group.
	\end{prop}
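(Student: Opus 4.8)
The plan is to combine the identities \eqref{Eqn: BGG}, \eqref{Eqn: Ringel}, and \eqref{Eqn: Koszul} collected in Section~\ref{Subsec: tilting sheaves} so as to rewrite $[T_x : L_z\langle i\rangle]$ entirely in terms of $\ell$-Kazhdan--Lusztig polynomials for $\check G$. First I would observe that in a graded highest weight category the graded Jordan--H\"older multiplicity of a simple in a module admitting a $\nabla$-filtration is computed by summing, over the $\nabla$-layers, the graded multiplicity $[\nabla_y : L_z\langle i\rangle]$ of the simple in each costandard layer. Applying this to the tilting sheaf $T_x$, which has a $\nabla$-filtration, gives
\begin{align*}
	\sum_{i\in\Z} [T_x : L_z\langle i\rangle]\, v^i
	= \sum_{y\in W}\ \sum_{j,k\in\Z} (T_x : \nabla_y\langle j\rangle)\, [\nabla_y : L_z\langle k\rangle]\, v^{j+k}.
\end{align*}
The first factor is $\sum_j (T_x:\nabla_y\langle j\rangle)v^j = {}^{\ell}\check h_{y,x}(v)$ by \eqref{Eqn: Koszul}.

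Next I would rewrite the second factor. By graded BGG reciprocity \eqref{Eqn: BGG}, $[\nabla_y : L_z\langle k\rangle] = (P_z\langle k\rangle : \Delta_y)$, and by Ringel duality \eqref{Eqn: Ringel} this equals $(T_{w_0 z}\langle k\rangle : \nabla_{w_0 y})$. Since shifting by $\langle k\rangle$ shifts all $\nabla$-layers by $k$, we have $(T_{w_0 z}\langle k\rangle : \nabla_{w_0 y}) = (T_{w_0 z} : \nabla_{w_0 y}\langle -k\rangle)$, so $\sum_k [\nabla_y : L_z\langle k\rangle]\, v^k = \sum_k (T_{w_0 z} : \nabla_{w_0 y}\langle -k\rangle)\, v^k = {}^{\ell}\check h_{w_0 y, w_0 z}(v^{-1})$, again invoking \eqref{Eqn: Koszul}. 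Substituting both computations into the double sum yields
\begin{align*}
	\sum_{i\in\Z} [T_x : L_z\langle i\rangle]\, v^i
	= \sum_{y\in W} {}^{\ell}\check h_{y,x}(v)\ {}^{\ell}\check h_{w_0 y, w_0 z}(v^{-1}).
\end{align*}

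Finally I would cut the sum down to the stated range $z\le y\le x$. The polynomial ${}^{\ell}\check h_{y,x}$ is nonzero only when $y\le x$ (the parity sheaf $\check{\mathscr{E}}_x$ is supported on $X_x$), and ${}^{\ell}\check h_{w_0 y, w_0 z}$ is nonzero only when $w_0 y \le w_0 z$, i.e. $z\le y$; so all terms outside $z\le y\le x$ vanish. This reorders the product to match the claimed formula (up to swapping the two factors, which is harmless). I expect the only genuinely delicate point to be bookkeeping with the grading shifts --- making sure the convention $\langle 1\rangle = [1](-1)$ interacts correctly with the definition \eqref{Eqn: lKL polynomials} of ${}^{\ell}\check h$ and with the substitutions $v\mapsto v^{\pm1}$ --- together with a careful justification that the graded JH multiplicity of $T_x$ decomposes additively over a $\nabla$-filtration, which uses that $[\nabla_y : L_z\langle k\rangle]$ is filtration-independent and that $\Ext^1$ between a costandard and a simple contributes nothing to composition multiplicities. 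Everything else is a direct concatenation of the recollected equivalences.
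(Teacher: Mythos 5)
Your proposal is correct and follows essentially the same route as the paper's proof: expand along a $\nabla$-filtration of $T_x$, convert $[\nabla_y:L_z\langle k\rangle]$ via graded BGG reciprocity and Ringel duality into tilting $\nabla$-multiplicities, and then apply Koszul duality to identify both factors with $\ell$-Kazhdan--Lusztig polynomials for $\check{G}$. The only point worth noting is that your ``harmless swap'' is really a substitution $v\mapsto v^{-1}$ rather than a mere reordering of factors (the paper's own chain of equalities produces the same swapped form); it is indeed harmless because the left-hand side is invariant under $v\mapsto v^{-1}$ by Verdier self-duality of $T_x$ and $L_z$, which is the symmetry recorded in Corollary \ref{cor:numerical duality}.
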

	
	\begin{proof}
	By considering the Jordan--H\"{o}lder multiplicity of $L_{z}\langle i \rangle$ in each subquotient appearing in a $\nabla$-filtration of $T_x$, we find:
	\begin{align*}
		[T_x : L_z \langle i \rangle]
		&= 
		\sum_{y \in W, j \in \Z} 
		(T_x : \nabla_y \langle j \rangle)
		[\nabla_y \langle j \rangle : L_z \langle i \rangle]
		\\
		&=
		\sum_{y \in W, j \in \Z}
		(T_x : \nabla_y \langle j \rangle)
		(P_{z} \langle i \rangle  : \Delta_{y} \langle j \rangle )
		&&
		\text{Equation (\ref{Eqn: BGG})}
		\\
		&=
		\sum_{y \in W, j \in \Z}
		(T_x : \nabla_y \langle j \rangle)
		(T_{w_0 z} \langle i \rangle : \nabla_{w_0y} \langle j \rangle)
		&&
		\text{Equation (\ref{Eqn: Ringel})}.
	\end{align*}
	The claim follows immediately from Equation (\ref{Eqn: Koszul}).
	\end{proof}

	The category $\Perv(G/B,\Bbbk)$ admits a Verdier duality $\mathbb{D}$ which satisfies $\mathbb{D}(L_x) \cong L_x$, $\mathbb{D}(T_x) \cong T_x$ and $\mathbb{D} \circ \langle 1 \rangle \cong \langle -1 \rangle \circ \mathbb{D}$. 
	These properties, together with Proposition \ref{Prop: Tilting JH mult}, imply: 
	
	\begin{cor}\label{cor:numerical duality}
		The following identity holds:
		\begin{align*}
			\sum_{z \leq y \leq x} {}^{\ell} \check{h}_{w_0 y, w_0 z}(v) ~{}^{\ell} \check{h}_{y,x}(v^{-1})
			=
			\sum_{z \leq y \leq x} {}^{\ell} \check{h}_{w_0 y, w_0 z}(v^{-1}) ~{}^{\ell} \check{h}_{y,x}(v).
		\end{align*}
	\end{cor}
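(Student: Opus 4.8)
The plan is to observe that the Laurent polynomial $P_{x,z}(v):=\sum_{i\in\Z}[T_x:L_z\langle i\rangle]\,v^i$ is invariant under $v\mapsto v^{-1}$, and then to substitute the formula of Proposition~\ref{Prop: Tilting JH mult} for $P_{x,z}(v)$ and for $P_{x,z}(v^{-1})$. First I would note that $P_{x,z}$ is genuinely a Laurent polynomial: since $\Perv(G/B,\Bbbk)$ is a finite (graded) highest weight category, $T_x$ has finite length and only finitely many of the multiplicities $[T_x:L_z\langle i\rangle]$ are nonzero.

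The key input is the Verdier duality $\mathbb{D}$ on $\Perv(G/B,\Bbbk)$. As an exact contravariant autoequivalence of the abelian category $\Perv(G/B,\Bbbk)$, it carries a composition series of an object $M$ to a composition series of $\mathbb{D}M$ (read in reverse order), so that $[M:N]=[\mathbb{D}M:\mathbb{D}N]$ for every object $M$ and every simple $N$. Taking $M=T_x$ and $N=L_z\langle i\rangle$, and using $\mathbb{D}(T_x)\cong T_x$, $\mathbb{D}(L_z)\cong L_z$, and $\mathbb{D}\circ\langle 1\rangle\cong\langle-1\rangle\circ\mathbb{D}$, we obtain $[T_x:L_z\langle i\rangle]=[\mathbb{D}T_x:\mathbb{D}(L_z\langle i\rangle)]=[T_x:L_z\langle-i\rangle]$ for all $i\in\Z$. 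Multiplying by $v^i$ and summing yields $P_{x,z}(v)=P_{x,z}(v^{-1})$.

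Finally, Proposition~\ref{Prop: Tilting JH mult} identifies $P_{x,z}(v)$ with the left-hand side $\sum_{z\leq y\leq x}{}^{\ell}\check{h}_{w_0y,w_0z}(v)\,{}^{\ell}\check{h}_{y,x}(v^{-1})$, and the substitution $v\mapsto v^{-1}$ identifies $P_{x,z}(v^{-1})$ with the right-hand side $\sum_{z\leq y\leq x}{}^{\ell}\check{h}_{w_0y,w_0z}(v^{-1})\,{}^{\ell}\check{h}_{y,x}(v)$; the equality $P_{x,z}(v)=P_{x,z}(v^{-1})$ from the previous paragraph then gives the corollary. The only point requiring any care is the interaction of $\mathbb{D}$ with the grading shift $\langle 1\rangle$ — that it exchanges $\langle i\rangle$ with $\langle-i\rangle$ rather than fixing the grading — but this is precisely the listed property $\mathbb{D}\circ\langle 1\rangle\cong\langle-1\rangle\circ\mathbb{D}$, so beyond invoking it the argument is entirely formal.
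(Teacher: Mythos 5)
Your proposal is correct and follows essentially the same route as the paper: the identity is deduced from the symmetry $[T_x : L_z \langle i \rangle] = [T_x : L_z \langle -i \rangle]$ furnished by Verdier duality (using $\mathbb{D}(T_x)\cong T_x$, $\mathbb{D}(L_z)\cong L_z$, and $\mathbb{D}\circ\langle 1\rangle\cong\langle -1\rangle\circ\mathbb{D}$), combined with Proposition \ref{Prop: Tilting JH mult} evaluated at $v$ and $v^{-1}$. The paper states this argument only in passing before the corollary, and your write-up simply spells out the same steps in more detail.
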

	This generalises \cite[\S 6(d)]{DL23} to the modular setting. 

    \begin{rem}
		It also follows from Proposition \ref{Prop: Tilting JH mult} that mixed tilting sheaves satisfy the reciprocity formula: 
        \begin{align*}
            [T_x : L_z \langle i \rangle] = [T_{w_0 z} : L_{w_0 x} \langle -i \rangle].
        \end{align*}
	\end{rem}

\subsection{Ungraded Jordan--H\"{o}lder multiplicities}
\label{subsec: ungraded JH}

    Achar and Riche \cite[\S5.3]{AR16} show that $\Perv(G/B, \Bbbk)$ is a $\Z$-graded enhancement of $\overline{\Perv}(G/B,\Bbbk)$ in the following sense. 
	If $\ell$ is a good prime for $G$, there exists a `forget grading' functor 
	\begin{align*}
		F : D^{\mix} (G/B, \Bbbk) \rightarrow D^b_{(B)}(G/B, \Bbbk).
	\end{align*}
    The functor $F$ is $t$-exact, and on the distinguished objects in $\Perv(G/B, \Bbbk)$ it satisfies:
	\begin{align*}
		F(L_x \langle i \rangle) \cong \overline{L}_x,
		&&
		F(\Delta_x \langle i \rangle) \cong \overline{\Delta}_x,
		&&
		F(\nabla_x \langle i \rangle) \cong \overline{\nabla}_x,
		&&
		F(T_x \langle i \rangle) \cong \overline{T}_x
	\end{align*}
	for all $x \in W$ and $i \in \Z$. For any sheaves $\mathcal{F}, \mathcal{G}$ in $D^{\mix} (G/B, \Bbbk)$, $F$ induces an isomorphism  
	\begin{align*}
		\bigoplus_{i \in \Z }\Hom_{D^{\mix}(G/B, \Bbbk)}(\mathcal{F},\mathcal{G} \langle i \rangle )
		\cong 
		\Hom_{D^b_{(B)}(G/B, \Bbbk)}(F(\mathcal{F}), F(\mathcal{G})). 
	\end{align*}
	The existence of this de-grading functor $F$ implies we may set $v=1$ in Proposition \ref{Prop: Tilting JH mult} to recover the following ungraded multiplicity statement.
	
	\begin{cor}
		Let $\ell$ be a good prime for $G$. 
		The following identity holds
		\begin{align*}
			[\overline{T}_x : \overline{L}_z]
			=
			\sum_{z \leq y \leq x} {}^{\ell} \check{h}_{w_0 y, w_0 z}(1) ~{}^{\ell} \check{h}_{y,x}(1).
		\end{align*}
	\end{cor}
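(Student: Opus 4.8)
The plan is to deduce this corollary from Proposition~\ref{Prop: Tilting JH mult} by specialising at $v = 1$, the key point being that the de-grading functor $F$ identifies the total graded multiplicity $\sum_{i \in \Z}[T_x : L_z\langle i\rangle]$ with the ungraded multiplicity $[\overline{T}_x : \overline{L}_z]$. So the real content is the identity
\begin{align*}
    [\overline{T}_x : \overline{L}_z] = \sum_{i \in \Z}[T_x : L_z\langle i\rangle],
\end{align*}
after which the right-hand side of the corollary is read off from Proposition~\ref{Prop: Tilting JH mult} evaluated at $v = 1$; this specialisation is legitimate because the right-hand side there is a finite sum of Laurent polynomials.

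To establish the displayed identity I would first record the relevant properties of $F$. Since $\ell$ is a good prime, $F$ is defined; it is $t$-exact, so it restricts to an exact functor of abelian categories $\Perv(G/B,\Bbbk) \to \overline{\Perv}(G/B,\Bbbk)$, and both of these categories have finite length (being, respectively, a graded highest weight category and a finite highest weight category). Moreover $F(L_z\langle i\rangle) \cong \overline{L}_z$ for all $z \in W$ and $i \in \Z$. Next I would transport a composition series: choose $0 = M_0 \subset M_1 \subset \cdots \subset M_n = T_x$ in $\Perv(G/B,\Bbbk)$ with $M_k/M_{k-1} \cong L_{z_k}\langle i_k\rangle$, apply $F$ to each short exact sequence $0 \to M_{k-1} \to M_k \to L_{z_k}\langle i_k\rangle \to 0$, and use exactness of $F$ together with $F(L_{z_k}\langle i_k\rangle) \cong \overline{L}_{z_k}$ to obtain a filtration $0 = F(M_0) \subseteq F(M_1) \subseteq \cdots \subseteq F(M_n) = F(T_x) = \overline{T}_x$ whose successive subquotients are the simple objects $\overline{L}_{z_k}$. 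As each $\overline{L}_{z_k}$ is nonzero, the inclusions are strict, so this is a genuine composition series of $\overline{T}_x$, and counting the factors isomorphic to $\overline{L}_z$ gives the identity.

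Combining this with Proposition~\ref{Prop: Tilting JH mult} at $v = 1$ then proves the corollary. I do not expect a serious obstacle: the construction of $F$, its $t$-exactness, and Proposition~\ref{Prop: Tilting JH mult} itself are all already in hand, so the only points requiring care are that $F$ does not merge composition factors — guaranteed by $t$-exactness together with the fact that it sends each $L_z\langle i\rangle$ to the \emph{nonzero} simple object $\overline{L}_z$ — and that the sums involved are finite, which is what makes the substitution $v = 1$ meaningful.
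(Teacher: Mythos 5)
Your proposal is correct and follows the same route as the paper: the paper likewise deduces the corollary by invoking the de-grading functor $F$ (available since $\ell$ is good for $G$) to identify $\sum_{i\in\Z}[T_x:L_z\langle i\rangle]$ with $[\overline{T}_x:\overline{L}_z]$ and then setting $v=1$ in Proposition \ref{Prop: Tilting JH mult}. The only difference is that you spell out the transport of a composition series along the $t$-exact functor $F$, a detail the paper leaves implicit.
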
 

\subsection{Upgrading equalities to isomorphisms}
\label{Ssec: Isomorphisms}

    Throughout Section \ref{Ssec: Isomorphisms} we assume that $p$ is a good prime for $G$, and $G$ is isomorphic to a product of $GL_n$ and of quasi-simple groups not of type $\tA$.
    This assumption allows us to identify various algebras with the coinvariant algebra, as explained in \cite[\S4.1]{AR16a}. 
    \\
    \par 
    Let $\check{R} = \Sym(X_*(T) \otimes \Bbbk)$ denote the symmetric algebra on the cocharacter lattice of $T \subset G$. 
    We define the graded homomorphism space
    \begin{align*}
        \Hom^{\bullet} (M,N) 
        := 
        \bigoplus_{i \in \Z}
        \Hom_{D^{\mix}(G/B,\Bbbk)} (M, N\langle i \rangle).
    \end{align*}
    Precomposition endows $\Hom^{\bullet} (M,N)$ with the structure of a graded right $\text{End}^{\bullet}(M)$-module. 
    By \cite[\S4.1]{AR16a} we have $\text{End}^{\bullet}(P_{\id}) \cong \check{R}/ \check{R}^{W}$, so $\Hom^{\bullet} (P_{\id},N)$ is a $\check{R}$-module.
    Since $\check{R}$ is the $\check{T}$-equivariant cohomology of a point, for any sheaf $\mathscr{F}$ in $D^{\mix}(\check{G}/\check{B}, \Bbbk)$, the hypercohomology $\bH^{\bullet}(\check{\mathscr{F}})$ has the structure of a $\check{R}$-module. 
    Theorem \ref{thm:main thm} can equivalently be stated as saying, for all $x,z \in W$, we have
    \begin{align*}
         \grrk \Hom^{\bullet} (P_z , T_x )
         =
         \grrk \bH^{\bullet}(\check{\mathscr{E}_x^z}). 
    \end{align*}
    It is natural to ask whether this can be upgraded to an isomorphism of graded $\check{R}$-modules.
    We can affirmatively answer this question when the Richardson variety is a Schubert variety. 

     \begin{prop}
\label{Prop: Categorify numerics}
        For each $x \in W$, we have isomorphisms:
        \begin{align*}
            \Hom^{\bullet}(P_{\id} , T_x) \cong \bH^{\bullet}(\check{\mathscr{E}}_x^{\id}),
            &&
            \Hom^{\bullet}(P_{x} , T_{w_0}) \cong \bH^{\bullet}(\check{\mathscr{E}}_{w_0}^{x}).
        \end{align*}
    \end{prop}

    \begin{proof}
        Let $\underline{w}$ be an expression for $w \in W$, and let  $\pi_{\underline{w}} : \check{X}_{\underline{w}} \rightarrow \check{X}_w$ be the Bott--Samelson resolution. 
        We call $\pi_{\underline{x}*}\mathbf{1}_{\check{X}_{\underline{w}}}$ a Bott--Samelson parity sheaf.
        We can also associate to $\underline{w}$ a Bott--Samelson tilting sheaf\footnote{The definition of $T_{\underline{w}}$ is technical, requiring the free monodromic tilting category of \cite{AMRW19}.} $T_{\underline{w}}$. 
        In \cite[\S 5.1]{AMRW19} it is shown that the category of Bott--Samelson parity sheaves on $\check{G}/\check{B}$ is equivalent to the category of Bott--Samelson tilting sheaves on $G/B$. 
        Further, the hypercohomology functor on Bott--Samelson parity sheaves corresponds to graded maps from the projective $P_{\id}$ on Bott--Samelson tilting sheaves \cite[\S3.6]{AMRW19}; i.e. there is an isomorphism of graded $\check{R}$-modules
        \begin{align*}
            \Hom^{\bullet} (P_{\id}, T_{\underline{w}})
            \cong
             \bH^{\bullet}(\pi_{\underline{w} *}\mathbf{1}_{\check{X}_{\underline{w}}}).
        \end{align*}
        The induced functor on the Karoubi envelope then yields an isomorphism
        \begin{align*}
            \Hom^{\bullet} (P_{\id}, T_x)
            \cong 
            \bH^{\bullet}(\check{\mathscr{E}}_x).
        \end{align*}
        The first isomorphism in the Proposition follows from the fact that $X_x^{\id}$ is the Schubert variety $X_x$. 
        The second isomorphism follows from the arguments in \cite[\S5]{Bai} and the fact that $X_{w_0}^x$ is isomorphic to the Schubert variety $X_{xw_0}$.
    \end{proof}

\subsection{An $\ell$-Kazhdan--Lusztig polynomial identity}\label{subsection:introduce r poly}
    We conclude this section by deriving an alternative version of the graded Jordan--H\"{o}lder multiplicities of tilting sheaves that is reminiscent of a point counting formula.
	\\
	\par 
	 Let $H$ denote the Hecke algebra of $(W,S)$. 
	It is the unital, associative $\Z[v,v^{-1}]$-algebra generated by the symbols $\{ \delta_x ~|~ x \in W \}$, subject to the relations:
	\begin{align*}
		(\delta_s+v)(\delta_s-v^{-1}) &=0 && \text{for each } s\in S, \text{ and}
		\\
		\delta_x \delta_y &= \delta_{xy} && \text{whenever } |x| + |y| = |xy|.
	\end{align*}
	It follows from \cite{Tits69} that $\{ \delta_x ~\vert~ x \in W\}$ is a $\Z[v,v^{-1}]$-basis of $H$.
	The algebra $H$ admits a unique $\Z$-linear involution satisfying $\overline{v} = v^{-1}$ and $\overline{\delta_x} = \delta_{x^{-1}}^{-1}$ for each $x \in W$.
	The polynomials $r_{y,x} \in \Z [v,v^{-1}]$ are defined by the equality
	\begin{align}
\label{Eqn: r pol}
		\overline{\delta_x} = \sum_y (-1)^{|xy|} ~r_{y,x}~ \delta_y. 
	\end{align}
	These are a renormalisation of the $R$-polynomials in \cite{KL79}, with $r_{y,x} = v^{|x| - |y|} R_{y,x}(v^{-2})$. 
	The only property of $r_{y,x}$ we require is
	\begin{align}
\label{Eqn: r pol bar}
		\overline{r_{y,x}} = (-1)^{|xy|} r_{y,x}
	\end{align}
	which follows immediately from their inductive construction in \cite[\S2.1(i)]{KL79}.
	\\
	\par 
	We begin with the following Lemma. 
	\begin{lem}
\label{Lem: r pol identity}
		Let $h = \sum_{x\in W} a_x \delta_x $ be any element of $H$ such that  $a_x \in \Z[v,v^{-1}]$ and $\overline{h}=h$,
		then $\overline{a_x} = \sum_{w \in W} a_w r_{x,w}$. 
	\end{lem}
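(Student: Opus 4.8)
The plan is to feed the self-duality hypothesis $\overline{h}=h$ into the defining relation (\ref{Eqn: r pol}) of the $r$-polynomials, compare coefficients in the basis $\{\delta_x\}_{x\in W}$, and then apply the bar involution a second time so that the symmetry (\ref{Eqn: r pol bar}) can be used to cancel the sign $(-1)^{|xy|}$.

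Concretely, I would first write $\overline{h}=\sum_{x\in W}\overline{a_x}\,\overline{\delta_x}$ using $\Z$-linearity of the involution, and substitute (\ref{Eqn: r pol}) to obtain
\[
\overline{h}=\sum_{x,y\in W}(-1)^{|xy|}\,r_{y,x}\,\overline{a_x}\,\delta_y
=\sum_{y\in W}\Bigl(\sum_{x\in W}(-1)^{|xy|}\,r_{y,x}\,\overline{a_x}\Bigr)\delta_y,
\]
where each inner sum is finite since $r_{y,x}=0$ unless $y\leq x$. Because $\{\delta_y\}_{y\in W}$ is a $\Z[v,v^{-1}]$-basis of $H$ and $\overline{h}=h=\sum_{y\in W}a_y\delta_y$, comparing the coefficient of $\delta_y$ on both sides yields, for every $y\in W$,
\[
a_y=\sum_{x\in W}(-1)^{|xy|}\,r_{y,x}\,\overline{a_x}.
\]

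Next I would apply the bar involution once more to this identity. As $\overline{\,\cdot\,}$ is an involution on $H$ and a ring automorphism of $\Z[v,v^{-1}]$, this gives $\overline{a_y}=\sum_{x\in W}(-1)^{|xy|}\,\overline{r_{y,x}}\,a_x$; invoking (\ref{Eqn: r pol bar}) to rewrite $\overline{r_{y,x}}=(-1)^{|xy|}r_{y,x}$ makes the two sign factors cancel, so $\overline{a_y}=\sum_{x\in W}a_x\,r_{y,x}$. Relabelling $y$ as $x$ and the summation index as $w$ produces exactly $\overline{a_x}=\sum_{w\in W}a_w\,r_{x,w}$.

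I do not anticipate any real obstacle: the only points requiring care are the finiteness of the sums and the legitimacy of comparing coefficients, both of which are immediate from the basis property of $\{\delta_x\}$, and essentially all of the mathematical content is already packaged in the symmetry (\ref{Eqn: r pol bar}) of the renormalised $R$-polynomials, which the excerpt supplies.
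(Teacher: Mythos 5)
Your proposal is correct and is essentially the paper's own argument: extract $a_x=\sum_w(-1)^{|xw|}\,\overline{a_w}\,r_{x,w}$ from Equation (\ref{Eqn: r pol}) and $\overline{h}=h$ by comparing coefficients, then apply the involution and use Equation (\ref{Eqn: r pol bar}) to cancel the sign. The extra remarks on finiteness are unnecessary here since $W$ is finite, but they do no harm.
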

	
	\begin{proof}
		Equation (\ref{Eqn: r pol}) together with the fact $\overline{h}=h$ implies $a_x = \sum_{w \in W} (-1)^{|xw|} \, \overline{a_w} \, r_{x,w}$. 
		The claim follows from applying the involution and using Equation (\ref{Eqn: r pol bar}). 
	\end{proof}
	
	The following is the main result of this section.
	
	\begin{prop}\label{prop:the polynomial formula}
    We may express the graded multiplicities of mixed simple sheaves in mixed tilting sheaves using the following formula:
		\begin{align*}
			\sum_{i \in \Z} ~[T_x : L_z \langle i \rangle] \, v^i
			=
			\sum_{z \leq y \leq x} {}^{\ell} \check{h}_{w_0 y, w_0 z}(v) ~{}^{\ell} \check{h}_{y,x}(v^{-1})=\sum_{z \leq y \leq y' \leq x} {}^{\ell} \check{h}_{w_0 y, w_0 z}(v) ~ r_{y,y'}(v) ~{}^{\ell} \check{h}_{y',x}(v).
		\end{align*}
	\end{prop}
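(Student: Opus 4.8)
The plan is to prove the second equality, since the first was already established in Proposition \ref{Prop: Tilting JH mult}. The strategy is to recognize the right-hand side as the matrix product, in the Hecke algebra $H$, of the change-of-basis data encoding $\ell$-Kazhdan-Lusztig polynomials against the $r$-polynomial matrix. Concretely, I would introduce the elements $b_x := \sum_{y \leq x} {}^{\ell}\check{h}_{y,x}(v) \, \delta_y \in H$ (the ``$\ell$-canonical basis'' attached to $\check{G}$), together with their bar-involution images, and likewise the dual objects $b_x^\vee := \sum_{z \leq w} {}^{\ell}\check{h}_{w_0 w, w_0 z}(v)\,\delta_w$ or rather the appropriate pairing partner. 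The claimed identity should then follow by expanding a single bar-self-dual element of $H$ in the $\{\delta_y\}$ basis and applying Lemma \ref{Lem: r pol identity}.

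In more detail, the key steps I would carry out are: (1) Fix $x$ and consider the element $h_x := \sum_{z \leq x}\big(\sum_i [T_x : L_z\langle i\rangle] v^i\big)\,\varepsilon_z$ in a suitable module, or — more directly — work inside $H$ itself and observe that $\overline{{}^{\ell}\check{h}_{y,x}}$ can be rewritten via Equation (\ref{Eqn: r pol}) as $\overline{{}^{\ell}\check{h}_{y,x}}(v) = \sum_{y \leq y' \leq x} r_{y,y'}(v)\, {}^{\ell}\check{h}_{y',x}(v)$. This last identity is exactly the content of Lemma \ref{Lem: r pol identity} applied to the bar-invariant Hecke algebra element $b_x = \sum_{y\leq x} {}^{\ell}\check{h}_{y,x}(v)\,\delta_y$: indeed $b_x$ is bar-invariant by the defining property of $\ell$-Kazhdan-Lusztig polynomials (the $\ell$-canonical basis is self-dual), so Lemma \ref{Lem: r pol identity} gives precisely $\overline{{}^{\ell}\check{h}_{y,x}} = \sum_{y'} {}^{\ell}\check{h}_{y',x}\, r_{y,y'}$. (2) Substitute this expression for ${}^{\ell}\check{h}_{y,x}(v^{-1}) = \overline{{}^{\ell}\check{h}_{y,x}}(v)$ into the middle sum of Proposition \ref{Prop: Tilting JH mult}, namely
\[
\sum_{z \leq y \leq x} {}^{\ell}\check{h}_{w_0 y, w_0 z}(v)\, {}^{\ell}\check{h}_{y,x}(v^{-1})
= \sum_{z \leq y \leq x} {}^{\ell}\check{h}_{w_0 y, w_0 z}(v) \sum_{y \leq y' \leq x} r_{y,y'}(v)\, {}^{\ell}\check{h}_{y',x}(v),
\]
and (3) interchange the order of summation and collect to obtain $\sum_{z \leq y \leq y' \leq x} {}^{\ell}\check{h}_{w_0 y, w_0 z}(v)\, r_{y,y'}(v)\, {}^{\ell}\check{h}_{y',x}(v)$, which is the desired right-hand side. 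One must check the index ranges are compatible: the constraint $z \leq y$ in the outer sum together with $y \leq y' \leq x$ reproduces exactly $z \leq y \leq y' \leq x$, with no extra or missing terms, using that $r_{y,y'} = 0$ unless $y \leq y'$.

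The main obstacle I anticipate is verifying cleanly that $b_x = \sum_{y \leq x}{}^{\ell}\check{h}_{y,x}(v)\,\delta_y$ is genuinely bar-invariant in $H$ with the normalization conventions in force here — i.e. matching the grading/shift conventions of Achar--Riche's parity sheaves (where ${}^{\ell}h_{y,x}$ is defined via stalk ranks with the shift $v^{-|y|-i}$) against the Hecke algebra normalization $\overline{\delta_x} = \delta_{x^{-1}}^{-1}$ and the relation $(\delta_s+v)(\delta_s - v^{-1}) = 0$. This is essentially the statement that the classes of indecomposable parity sheaves form a self-dual basis of $H$, which is standard (it follows from Verdier self-duality of $\mathscr{E}_x$ together with the character map from $\Parity$ to $H$), but the bookkeeping of signs and powers of $v$ needs care. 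Once that is in hand, everything else is a formal manipulation using Lemma \ref{Lem: r pol identity} and reindexing a double sum, so the proof is short.
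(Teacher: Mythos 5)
Your proposal is correct and follows essentially the same route as the paper: the paper also forms the element ${}^{\ell}\check{b}_x = \sum_y {}^{\ell}\check{h}_{y,x}\,\delta_y$, notes it is bar-invariant because $\check{\mathscr{E}}_x$ is Verdier self-dual (citing \cite[\S2.2]{JMW14} for the bookkeeping you worry about), and then concludes by Lemma \ref{Lem: r pol identity} together with Proposition \ref{Prop: Tilting JH mult}. Your steps (2) and (3) simply spell out the substitution and reindexing that the paper leaves implicit.
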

	
	\begin{proof}
		The element ${}^\ell \check{b}_x :=  \sum_y {}^\ell \check{h}_{y,x} \, \delta_y$ satisfies $\overline{{}^\ell \check{b}_x} = {}^\ell \check{b}_x$ because the parity sheaf $\check{\mathscr{E}}_x$ is Verdier self-dual \cite[\S2.2]{JMW14}. 
		The claim then follows from Proposition \ref{Prop: Tilting JH mult} and Lemma \ref{Lem: r pol identity}.  
	\end{proof}

\section{Richardson varieties and their geometry}
\label{Sec: Geometry}

    In this section we prove that the polynomial formula of Proposition \ref{prop:the polynomial formula} also computes the Poincar\'{e} polynomial of the cohomology of a sheaf $\mathscr{E}(X^z_x)$ on the Richardson variety $X^z_x$.
    In Section \ref{Ssec: Richardson} we recall various geometric properties of Richardson varieties. We give the counting argument for Theorem \ref{thm:geometric ranks} in Section \ref{Subsec: geometric argument}, assuming the existence and formal properties of the sheaves $\mathscr{E}(X^z_x)$. 
    In Section \ref{subsection:geometric extensions on rich} we construct these sheaves as a motivic variant of the geometric extension\footnote{See the discussion preceding Proposition \ref{Prop:geo properties} for a discussion of this notational change and associated change in normalisation.} of \cite{HW23,McN18}, and in Section \ref{Ssec: Tensor product} we provide an alternate description of this sheaf as a tensor product.

\subsection{Recollections on Richardson varieties}
\label{Ssec: Richardson}
	In this section we recall the definition of Richardson varieties, the local structure of their singularities, and their resolutions of Bott--Samelson type, as determined in \cite{KWY13} and \cite{KLS14} respectively.
	 We refer the reader to \cite{Ri92,De85, KLS14, Sp24} for more background on Richardson varieties. From this point onwards we work in an algebraic context, our schemes are defined over $\mathbb{Z}$ or finite fields, while the relevant sheaf theory will be $\ell$-adic \'{e}tale cohomology. In particular, $G$, $B$, $B^-$ and $T$ denote integral forms of the complex groups of Section \ref{Subsec: tilting sheaves}. 
    \\
    \par 
	The (closed) Richardson variety $X^z_x$ associated to $z,x \in W$, with $ z \leq w$, is the intersection of the Schubert variety $X_x$ and the opposite Schubert variety $X^z$ within the flag variety $G/B$:
	\begin{align}
		X^z_x:=X_x\cap X^z=\overline{BxB/B}\cap \overline{B^{-}zB/B}.
	\end{align}
	It is an irreducible variety of dimension $|x|-|z|$. The open Richardson variety $(X^z_x)^\circ$ is the intersection of the associated Schubert cells:
	\begin{align}
		(X^z_x)^\circ:=(BxB/B) \cap (B^{-}zB/B).
	\end{align}
	It is a smooth dense open subvariety of $X^z_x$.
	Any closed Richardson variety $X^z_x$ is stratified by open Richardson varieties:
	\begin{align}
\label{Richardson_strat}
    	X^z_x=\bigsqcup_{z\leq y\leq y'\leq x}(X^y_{y'})^\circ.
	\end{align}

	As (open) Richardson varieties are defined over $\mathbb{Z}$, we may base change to $\mathbb{F}_p$ and consider their points defined over the finite fields $\mathbb{F}_q$. 
	This point count is a polynomial in $q$, i.e.
	independent of $p$. For	open Richardson varieties this polynomial is, up to normalisation, the polynomial $r_{y,y'}$  introduced in Section \ref{subsection:introduce r poly}:
	\begin{align}
		|(X^y_{y'})^\circ(\mathbb{F}_q)|
		=
		R_{y,y'}(q)
		=
		q^{\frac{|y'|-|y|}{2}}
		r_{y,y'}(q^{-1/2}).
	\end{align}
    See \cite[\S A]{KL79}.
    \\
    \par 
	Decomposing $X^z_x$ using the stratification (\ref{Richardson_strat}), we may describe the singularity type of a point $u\in (X^y_{y'})^\circ\subset X^z_x$ using the local description of Knutson, Woo, and Yong \cite[\S1]{KWY13}.
	\begin{prop}
\label{Richardson_link}
	The singularities occurring in Richardson varieties are products of singularities in Schubert varieties, up to products with affine spaces. 
	Let $u$ be a geometric point in $(X^y_{y'})^\circ\subset X^z_x$, and $(u_1,u_2)$ points in the open cell $(X_{y'})^\circ\times (X^y)^\circ$ of the product $X_{y'}\times X^y$. 
	Then there exists an isomorphism $\gamma$ from a neighbourhood $U$ of $u$ cross an affine space with a neighbourhood $U_1\times U_2$ of $(u_1,u_2)$ in $X_x\times X^z$, such that $\gamma(u,0)=(u_1,u_2)$:
	\begin{align*}
    	U\times \mathbb{A}^{|w_0|}~ &\tilde{\longrightarrow} ~ U_1\times U_2\subset X_x\times X^z\\
    	(u,0)~ &\! \longmapsto \gamma(u,0)=(u_1,u_2).
	\end{align*}
	\end{prop}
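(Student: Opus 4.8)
The plan is to reduce to the case $u_1=u_2=u$ by a group action, and then to build the required isomorphism by superimposing the Kazhdan--Lusztig local product structures of $X_x$ and of $X^z$. First I would use that $B$ acts transitively on the Schubert cell $(X_{y'})^{\circ}=By'B/B$ while preserving $X_x$, and $B^{-}$ acts transitively on $(X^{y})^{\circ}=B^{-}yB/B$ while preserving $X^z$. Picking $b\in B$ with $b\cdot u=u_1$ and $b^{-}\in B^{-}$ with $b^{-}\cdot u=u_2$, the element $(b,b^{-})\in B\times B^{-}$ acting on $X_x\times X^z$ carries a neighbourhood of $(u,u)$ isomorphically onto one of $(u_1,u_2)$, so it suffices to exhibit an isomorphism $U\times\mathbb{A}^{|w_0|}\xrightarrow{\sim}U_1\times U_2$ with $(u,0)\mapsto(u,u)$, where $U$, $U_1$, $U_2$ are neighbourhoods of $u$ in $X^z_x$, $X_x$, $X^z$ respectively.

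Next I would pass to the open cell $\Omega\cong\mathbb{A}^{|w_0|}$ of $G/B$ centred at $u$, that is, the orbit through $u$ of the unipotent radical of the Borel opposite to $\mathrm{Stab}_G(u)$. Inside $\Omega$ the Bruhat cells $L_1:=(X_{y'})^{\circ}\cap\Omega$ and $L_2:=(X^{y})^{\circ}\cap\Omega$ are smooth, of dimensions $|y'|$ and $|w_0|-|y|$, and by the classical transversality of a Bruhat cell and an opposite Bruhat cell (Richardson \cite{Ri92}, Deodhar \cite{De85}) one has $T_uL_1+T_uL_2=T_u\Omega$ and $T_uL_1\cap T_uL_2=T_u(X^{y}_{y'})^{\circ}$, of dimension $b:=|y'|-|y|$. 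Since $|y|+b+(|w_0|-|y'|)=|w_0|$, I can choose coordinates $\Omega\cong\mathbb{A}^{|y|}\times\mathbb{A}^{b}\times\mathbb{A}^{|w_0|-|y'|}$, with $u\leftrightarrow 0$, in which $L_1=\mathbb{A}^{|y|}\times\mathbb{A}^{b}\times\{0\}$ and $L_2=\{0\}\times\mathbb{A}^{b}\times\mathbb{A}^{|w_0|-|y'|}$, so that $L_1\cap L_2=\{0\}\times\mathbb{A}^{b}\times\{0\}$ is a neighbourhood of $u$ in $(X^{y}_{y'})^{\circ}$.

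The heart of the proof is the Kazhdan--Lusztig local product structure, used on both sides simultaneously. Let $U_{y'}\subseteq B$ be the unipotent subgroup acting simply transitively on the cell $(X_{y'})^{\circ}$; it preserves $X_x$, so after shrinking $\Omega$ its orbits foliate a neighbourhood of $L_1$ and $X_x\cap\Omega$ is a union of them. I would arrange the coordinates above so that these orbits are the fibres of the projection to $\mathbb{A}^{|w_0|-|y'|}$, which gives $X_x\cap\Omega=\mathbb{A}^{|y|}\times\mathbb{A}^{b}\times\mathcal{Z}_1$, where $\mathcal{Z}_1\subseteq\mathbb{A}^{|w_0|-|y'|}$ is a neighbourhood of $0$ in the Kazhdan--Lusztig slice of $X_x$ through the cell of $y'$ --- a local model for the Schubert singularity, of dimension $|x|-|y'|$ and recorded by ${}^{\ell}h_{y',x}$. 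Running the mirror argument for the analogous subgroup $U^{-}_{y}\subseteq B^{-}$ and $X^z$ (equivalently, translating by $w_0$ to turn $X^z$ into the genuine Schubert variety $X_{w_0z}$ and $(X^{y})^{\circ}$ into the cell of $w_0y$) gives $X^z\cap\Omega=\mathcal{Z}_2\times\mathbb{A}^{b}\times\mathbb{A}^{|w_0|-|y'|}$ with $\mathcal{Z}_2\subseteq\mathbb{A}^{|y|}$ the corresponding slice, of dimension $|y|-|z|$ and recorded by ${}^{\ell}h_{w_0y,w_0z}$. The delicate point --- and the step I expect to be the main obstacle --- is that these two product structures must be realised in one and the same chart $\Omega$: this amounts to compatibly straightening the two transverse foliations by $U_{y'}$-orbits and by $U^{-}_{y}$-orbits, and it is cleanest to phrase inside $G/B\times G/B$ as the assertion that $X_x\times X^z$ meets the diagonal $\Delta$ transversally up to a smooth factor, that factor being a slice to $\Delta$ of dimension $|w_0|$; this is the content of \cite[\S1]{KWY13} and accounts for the $\mathbb{A}^{|w_0|}$ in the statement.

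Granting the common chart, I would conclude by bookkeeping. On one hand $X^z_x\cap\Omega=(X_x\cap\Omega)\cap(X^z\cap\Omega)=\mathcal{Z}_2\times\mathbb{A}^{b}\times\mathcal{Z}_1$, of dimension $(|y|-|z|)+b+(|x|-|y'|)=|x|-|z|$ as it must be; on the other, a final use of the ordinary Kazhdan--Lusztig structure identifies a neighbourhood of $u$ in $X_x$ with $\mathbb{A}^{|y'|}\times\mathcal{Z}_1$ and one in $X^z$ with $\mathbb{A}^{|w_0|-|y|}\times\mathcal{Z}_2$. Thus $(X^z_x\cap\Omega)\times\mathbb{A}^{|w_0|}\cong\mathcal{Z}_1\times\mathcal{Z}_2\times\mathbb{A}^{b+|w_0|}$ while $U_1\times U_2\cong\mathcal{Z}_1\times\mathcal{Z}_2\times\mathbb{A}^{|y'|+|w_0|-|y|}$, and since $b+|w_0|=|y'|+|w_0|-|y|$ the two are isomorphic with $(u,0)$ matched to $(u,u)$; together with the first paragraph this gives the general statement. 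Finally, since $\Omega$, the unipotent orbits and the Kazhdan--Lusztig slices are all cut out by explicit group-theoretic data defined over $\mathbb{Z}$ and stable under base change, the argument of \cite{KWY13} carried out over an algebraically closed field transfers without change to the integral, $\ell$-adic setting needed in the rest of the paper.
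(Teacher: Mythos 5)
The paper does not actually prove this proposition: it is quoted verbatim from Knutson--Woo--Yong \cite[\S1]{KWY13}, so your overall skeleton (translate by $B\times B^-$ to reduce to $u_1=u_2=u$, then combine the two Kazhdan--Lusztig product structures, deferring the compatibility to \cite{KWY13}) is consistent with what the paper itself does. But read as a proof, your middle steps contain a genuine problem, and it sits exactly at the point you flag as delicate. The claim that one can choose coordinates on the Zariski-open chart $\Omega\cong\mathbb{A}^{|w_0|}$ in which $L_1=(X_{y'})^\circ\cap\Omega$ and $L_2=(X^y)^\circ\cap\Omega$ are linear coordinate subspaces is false in the algebraic category: it would force $L_1\cap L_2$, a Zariski neighbourhood of $u$ in the open Richardson stratum, to be isomorphic to $\mathbb{A}^{b}$. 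Already for $G=SL_2$, $x=s$, $z=e$, the stratum $(X^{e}_{s})^\circ=BsB/B\cap B^-B/B\cong\mathbb{G}_m$ has no Zariski neighbourhood isomorphic to $\mathbb{A}^1$ (and in this case $L_1$, $L_2$ are not even affine spaces inside the chart). Such simultaneous straightening of two transverse smooth subvarieties, or of your two foliations, is available analytically or \'etale-locally, but not Zariski-locally; and the proposition is needed in the paper as an honest isomorphism of varieties over $\mathbb{F}_q$, since it is fed into the Zariski-locality statement for the motivic geometric extension (Proposition \ref{prop:locality of geo ext}) and into the Frobenius point count. So an argument that only produces a local-analytic model does not yet give the statement as used.

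The genuinely algebraic product decompositions come from unipotent orbits, not from choosing coordinates: the group $U\cap y'U^-(y')^{-1}$ acts simply transitively on $(X_{y'})^\circ$ and the Kazhdan--Lusztig lemma gives $X_x\cap y'U^-B/B\cong (X_{y'})^\circ\times\bigl(X_x\cap B^-y'B/B\bigr)$, with the mirror statement for $X^z$ using $U^-\cap yU^-y^{-1}$; this is why your descriptions of $U_1$ and $U_2$ are fine. What is missing is precisely the compatibility of these two group-theoretic product structures in a single neighbourhood of $u$ (equivalently, the transversal-to-the-diagonal statement in $G/B\times G/B$ you mention), together with the bookkeeping that the stratum direction gets absorbed into an affine factor --- note that your final step quietly uses ``$V\times\mathbb{A}^{|w_0|}\cong\mathbb{A}^{b+|w_0|}$'' for a neighbourhood $V$ in the stratum, which again is not automatic and is only legitimate because of the stronger product structure proved in \cite{KWY13}. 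Since you defer exactly this step to \cite{KWY13}, the heart of the proposition is assumed rather than proved; to make the write-up correct you should either carry out the KWY construction (their argument is by unipotent group actions and is Zariski-local, hence also valid over $\mathbb{Z}[1/p]$ and $\mathbb{F}_q$ as the paper needs) or simply cite it, as the paper does, and drop the coordinate-straightening paragraphs, which as stated are not true.
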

\begin{rem}
In \cite{KWY13} the field of definition is assumed to be algebraically closed, but the argument adapts to our case by descent, as the chart of their Lemma 2.2 is defined over $\mathbb{Z}$.
\end{rem}

This proposition allows us to reduce the study of singularities in Richardson varieties to those of Schubert varieties. Schubert varieties admit well understood Bott--Samelson resolutions of their singularities, and in all characteristics, fibre products of these provide resolutions for Richardson varieties \cite[\S A]{KLS14}.

\begin{prop}\label{Prop:res of richardsons}
    Let $\underline{x},\underline{z}$ be reduced expressions for $x$ and $z$, with $X_{\underline{x}}\to X_x$ and $X^{\underline{z}}\to X^z$ the Bott--Samelson resolutions of the associated Schubert and opposite Schubert varieties. Then, as maps to $G/B$, their fibre product is smooth, and yields a resolution of the Richardson variety: \[X^{\underline{z}}_{\underline{x}}:=X_{\underline{x}}\times_{G/B} X^{\underline{z}}\to X^z_x.
    \]
\end{prop}

\subsection{The main geometric argument}
\label{Subsec: geometric argument}
In this section we prove the polynomial expression of Proposition \ref{prop:the polynomial formula} computes the graded rank of the cohomology of a certain $\ell$-adic sheaf $\mathscr{E}({X}^z_x)$ on the Richardson variety. 
The sheaf $\mathscr{E}({X}^z_x)$ is a motivic variant of the geometric extension as introduced in \cite{HW23,McN18}, and its stalks measure the singularities of ${X}^z_x$.
Decomposing along the stratification of (\ref{Richardson_strat}), the stalks of $\mathscr{E}({X}^z_x)$ decompose as a product of the stalks in the associated Schubert varieties. 
These are encoded by the polynomials ${}^\ell {h}_{x,y}$, and the purity of the relevant cohomology groups allows one to prove Theorem \ref{thm:geometric ranks} using the Grothendieck--Lefschetz trace formula. In this section we only use formal properties of the sheaves $\mathscr{E}({X}^z_x)$, deferring the construction and verification of these properties to Section \ref{subsection:geometric extensions on rich}.
\\
\par
In what follows we work with Richardson varieties $X$ over a finite field $\mathbb{F}_p$ for $p\neq \ell$, on the flag variety $G/B$ rather than the Langlands dual $\check{G}/\check{B}$. We work within a\footnote{One may use the $\mathbb{Z}/\ell^n\mathbb{Z}$-limit formalism following \cite{E07}, or directly as constructible sheaves for the pro-\'{e}tale topology of \cite{BS13}.} six functor formalism $X\mapsto D^b_c(X,\mathbb{Z}_\ell)$ of mixed $\mathbb{Z}_\ell$-adic sheaves on such varieties, and all functors between sheaf categories are derived. We require the integral structure for splitting purposes in the definition of $\mathscr{E}^z_x$, but all ranks and (super)traces are be taken after tensoring with $\mathbb{Q}_\ell$. We denote the Tate twist by $\{1\}$. In this section we work with the normalised parity sheaf $\mathscr{E}(X_x)$ and geometric extension $\mathscr{E}(X_x^z)$, such that they extend the constant sheaf in degree zero on the smooth locus. These differ by a shift in the dimension of the associated variety, explicitly: \begin{align*}
    \mathscr{E}_x=\mathscr{E}(X_x)(|x|), &&
     \mathscr{E}^z_x =\mathscr{E}(X^z_x)(|x|-|z|).
\end{align*} This is convenient for point counting, but we warn the reader that $\mathscr{E}(X_x)$ and $\mathscr{E}(X^z_x)$ are only self dual up to a shift and Tate twist.

\begin{prop}\label{Prop:geo properties}
On the Richardson variety $X^z_x$ there exists a  mixed $\ell$-adic sheaf $\mathscr{E}(X^z_x)$, a motivic variant of the geometric extension, which has the following properties:
\begin{enumerate}
    \item The group $\mathbb{H}^0 (\mathscr{E}(X^z_x))$ is nonzero, and the cohomology $\mathbb{H}^\bullet (\mathscr{E}(X^z_x))$ is pure; the eigenvalues of the Frobenius on $\mathbb{H}^i(\mathscr{E}(X^z_x))$ have absolute value $q^{i/2}$.
    \item The sheaf $\mathscr{E}(X^z_x)$ is pointwise pure and the Frobenius eigenvalues on the stalks are all (integer) powers of $q$.
    \item With respect to the natural stratification of Richardson varieties \eqref{Richardson_strat}, the stalk at a point $u$ in $(X^{y}_{y'})^\circ$ is a product of the stalks of the (normalised) parity sheaves on the associated Schubert varieties:\[i_u^*\mathscr{E}(X^z_x)\cong i_{y'}^*\mathscr{E}(X_x)\otimes i_{y}^*\mathscr{E}(X^z).\]
    
\end{enumerate}
\end{prop}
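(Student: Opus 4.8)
The plan is to construct $\mathscr{E}(X^z_x)$ directly via the local model of Knutson--Woo--Yong and then verify the three properties one by one, with property (3) being the guiding constraint that essentially forces the definition. First I would fix a sufficiently large prime $p$ so that $X^z_x \times \operatorname{Spec}(\mathbb{F}_p)$ admits a resolution of singularities, and recall that on the Schubert varieties $X_x$ (resp.\ opposite Schubert varieties $X^z$) the geometric extension $\mathscr{E}(X_x)$ of \cite{HW23} agrees with the indecomposable parity sheaf, normalised so that it extends the constant sheaf in cohomological degree $0$. The key geometric input is Proposition \ref{Richardson_link}: near any point $u \in (X^y_{y'})^\circ$, the Richardson variety $X^z_x$ looks, up to smooth factors $\mathbb{A}^{|w_0|}$, like the product $X_{y'} \times X^z$ near a point $(u_1,u_2)$ in the open cell $(X_{y'})^\circ \times (X^y)^\circ$. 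Since $(X^y_{y'})^\circ$ is smooth and the link of $u$ inside $X^z_x$ is (a smooth fibration over) the product of the links of $u_1$ in $X_{y'}$ and of $u_2$ in $X^z$, the external tensor product $\mathscr{E}(X_x) \boxtimes \mathscr{E}(X^z)$, restricted appropriately and corrected by the smooth $\mathbb{A}^{|w_0|}$ factor, gives a well-defined sheaf locally; the content of the construction (deferred to Section \ref{subsection:geometric extensions on rich}) is that these local pieces glue to a global sheaf on $X^z_x$. Property (3) is then immediate from the construction, once one checks that the smooth factor $\mathbb{A}^{|w_0|}$ contributes trivially to stalks in the chosen normalisation (it only shifts; the normalisation to degree $0$ removes this), so $i_u^* \mathscr{E}(X^z_x) \cong i_{y'}^* \mathscr{E}(X_x) \otimes i_y^* \mathscr{E}(X^z)$.

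For property (2), pointwise purity of $\mathscr{E}(X^z_x)$ and the fact that the Frobenius eigenvalues on stalks are integer powers of $q$ follow from property (3) together with the corresponding facts for parity sheaves on Schubert varieties: the stalks $i_{y'}^*\mathscr{E}(X_x)$ and $i_y^*\mathscr{E}(X^z)$ are pointwise pure with Frobenius eigenvalues integer powers of $q$ (this is encoded precisely by the $\ell$-Kazhdan--Lusztig polynomials ${}^{\ell}h_{y',x}$ and their opposite analogues), and a tensor product of pure complexes with eigenvalues $q^{a}$ and $q^{b}$ is pure with eigenvalues $q^{a+b}$. One must be slightly careful that "pointwise pure" is meant in the motivic/mixed sense of \cite{HW23}, i.e.\ each stalk cohomology $\mathcal{H}^n(i_u^*\mathscr{E}(X^z_x))$ is pure of weight $n$ (or of a fixed weight determined by $n$); this again transfers through the external tensor product.

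For property (1), I would argue as follows. Nonvanishing of $\mathbb{H}^0(\mathscr{E}(X^z_x))$ is clear because $\mathscr{E}(X^z_x)$ extends the constant sheaf $\underline{\mathbb{Q}_\ell}$ in degree $0$ on the smooth open dense stratum $(X^z_x)^\circ$, which is connected, so $\mathbb{H}^0$ receives a nonzero contribution (e.g.\ via the edge map of the hypercohomology spectral sequence, or simply because the restriction map to a point of the open stratum is well-defined in degree $0$). Purity of $\mathbb{H}^\bullet(\mathscr{E}(X^z_x))$ — that $\mathbb{H}^i$ is pure of weight $i$, so Frobenius eigenvalues have absolute value $q^{i/2}$ — is the deepest point. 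Here I would use the weight spectral sequence associated to the stratification \eqref{Richardson_strat}: $\mathbb{H}^{p+q}(X^z_x, \mathscr{E}(X^z_x))$ is computed from the compactly supported cohomology of the open strata $(X^y_{y'})^\circ$ with coefficients in the (pointwise pure, by (2)) restriction $\mathscr{E}(X^z_x)|_{(X^y_{y'})^\circ}$. Each such restriction is a successive extension of shifted constant sheaves by pointwise purity, so its compactly supported cohomology is mixed of weights determined by the strata dimensions and the stalk weights; combined with Deligne's weight bounds and the fact — visible from the polynomial identity of Proposition \ref{prop:the polynomial formula} and Corollary \ref{cor:numerical duality}, which shows the relevant graded rank is genuinely a polynomial supported in a single parity — one concludes the spectral sequence degenerates and produces a pure answer. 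Concretely, the cleanest route is probably to invoke the purity of parity sheaves and the decomposition theorem for a resolution $\pi \colon \widetilde{X^z_x} \to X^z_x$: $\mathbb{H}^\bullet(X^z_x,\mathscr{E}(X^z_x))$ is a summand of $\mathbb{H}^\bullet$ of a smooth projective variety (suitably shifted), hence pure by Deligne. \textbf{The main obstacle} I anticipate is precisely verifying that the gluing in the construction produces a sheaf whose global hypercohomology is pure in the naive sense (weight $i$ in degree $i$), rather than merely mixed with the expected weight bound; this is where one genuinely needs the parity/purity of the input parity sheaves on Schubert varieties plus a degeneration argument, and it is the step most sensitive to the motivic (as opposed to $\ell$-adic perverse) setting, so I would expect the bulk of Section \ref{subsection:geometric extensions on rich} to be devoted to it.
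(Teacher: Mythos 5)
There is a genuine gap, and it sits exactly where you deferred the work: you propose to \emph{define} $\mathscr{E}(X^z_x)$ by gluing local models (restrictions of $\mathscr{E}(X_{x})\boxtimes\mathscr{E}(X^{z})$ on the Knutson--Woo--Yong charts, corrected by the $\mathbb{A}^{|w_0|}$ factor), asserting that ``these local pieces glue to a global sheaf.'' Objects of the $\ell$-adic derived category do not glue from local data on a cover without descent data, and you supply no compatibilities on overlaps; so the construction, and hence your ``immediate'' property (3), is not established. The paper proceeds in the opposite direction: $\mathscr{E}(X^z_x)$ is constructed \emph{globally}, as the summand of $f_*\mathbf{1}_X$ (for any resolution $f:X\to X^z_x$) cut out by a minimal idempotent lying in the image of the $\mathbb{Z}_\ell$-cycle class map from the Chow group of $X\times_{X^z_x}X$, independence of the resolution being checked by a correspondence argument. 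The local description is then a \emph{theorem} about this global object: one proves a locality property ($\mathscr{E}(Y)|_V\cong\mathscr{E}(V)$, via the localisation sequence for Chow groups and idempotent lifting in a local endomorphism ring), homotopy invariance under $\times\,\mathbb{A}^n$ (homotopy invariance of Chow groups), identification with the parity sheaf on Schubert varieties (Bott--Samelson fibre products have affine pavings, so the cycle class map is an isomorphism), and a K\"unneth formula; only then does the KWY local model yield the stalk formula of (3), and (2) follows from pointwise purity of parity sheaves as you say. Note that the locality step is precisely the reason the \emph{motivic} (numerical Chow) variant is introduced; the authors state they do not know it for the ordinary geometric extension of \cite{HW23,McN18}, so it cannot be waved through.

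For property (1), your ``cleanest route'' (the hypercohomology is a summand of $\hH^\bullet$ of a smooth proper variety, hence pure by Deligne) is indeed the paper's argument, but it is available only because $\mathscr{E}(X^z_x)$ is constructed as a summand of $f_*\mathbf{1}_X$; your glued-local construction provides no such summand structure, so this purity argument has nothing to attach to, and no decomposition theorem is needed or used. Your fallback weight-spectral-sequence sketch is too vague to carry the claim and is partly circular: it invokes the polynomial identity of Proposition \ref{prop:the polynomial formula} and Corollary \ref{cor:numerical duality}, which in the paper's logic are inputs to the Frobenius-trace computation of Theorem \ref{Thm:geo argument}, not a proof of purity of $\mathbb{H}^\bullet(\mathscr{E}(X^z_x))$. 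Nonvanishing of $\mathbb{H}^0$ is also cleaner in the paper's setup: it comes from the canonical nonzero map $\mathbf{1}_{X^z_x}\to f_*\mathbf{1}_X\to\mathscr{E}(X^z_x)$, which is an isomorphism over the smooth locus.
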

With these properties, we may prove Theorem \ref{thm:geometric ranks} by computing the trace of the Frobenius on the cohomology of $\mathscr{E}(X^z_x)$ in two ways.
\begin{thm}\label{Thm:geo argument}
The graded rank of $\mathbb{H}^\bullet(\mathscr{E}(X^z_x))$ is given by the following formula: 

\[\grrk \mathbb{H}^\bullet(\mathscr{E}(X^z_x))=v^{|x|-|z|}\sum_{z \leq y \leq y' \leq x} {}^{\ell} h_{w_0 y, w_0 z}(v) ~ r_{y,y'}(v) ~{}^{\ell} h_{y',x}(v).\]
This is a polynomial in $v^2$, and the Frobenius acts on $\mathbb{H}^{2i}(\mathscr{E}(X^z_x))$ as multiplication by $q^i$.
\end{thm}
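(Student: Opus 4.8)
The plan is to compute the supertrace of Frobenius on the hypercohomology $\mathbb{H}^\bullet(\mathscr{E}(X^z_x))$ in two ways, following the strategy of \cite{DL23}. On the one hand, by part (1) of Proposition \ref{Prop:geo properties}, the cohomology is pure with the Frobenius eigenvalue on $\mathbb{H}^i$ of absolute value $q^{i/2}$; combined with the positivity of $\mathbb{H}^0$, this will force (once we establish the point-count formula below) that $\mathbb{H}^\bullet$ is concentrated in even degrees with Frobenius acting on $\mathbb{H}^{2i}$ by $q^i$. Granting this for the moment, the graded rank is recovered from the trace by the substitution $q^{1/2} \mapsto v^{-1}$, up to the normalising shift: concretely, $\Tr(\mathrm{Frob} \mid \mathbb{H}^\bullet(\mathscr{E}(X^z_x))) = q^{|x|-|z|} \cdot v^{|x|-|z|}\grrk\mathbb{H}^\bullet(\mathscr{E}(X^z_x))|_{v \mapsto q^{-1/2}}$, where the extra $q^{|x|-|z|}$ accounts for the shift by $\dim X^z_x = |x|-|z|$ built into the normalisation $\mathscr{E}(X^z_x)$.

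On the other hand, I would compute the same trace via the Grothendieck--Lefschetz trace formula: $\Tr(\mathrm{Frob} \mid \mathbb{H}^\bullet(\mathscr{E}(X^z_x))) = \sum_{u \in X^z_x(\mathbb{F}_q)} \Tr(\mathrm{Frob} \mid i_u^* \mathscr{E}(X^z_x))$, where I use that $\mathscr{E}(X^z_x)$ is pointwise pure with integer Frobenius weights on stalks (part (2)), so there are no subtle sign or cancellation issues. Decomposing $X^z_x(\mathbb{F}_q)$ along the stratification \eqref{Richardson_strat} into open Richardson cells $(X^y_{y'})^\circ(\mathbb{F}_q)$, and using the stalk formula of part (3), the stalk contribution on the stratum indexed by $(y,y')$ is $\Tr(\mathrm{Frob}\mid i_{y'}^*\mathscr{E}(X_x)) \cdot \Tr(\mathrm{Frob}\mid i_y^*\mathscr{E}(X^z))$. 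Since $\mathscr{E}(X_x)$ is the parity sheaf shifted by $|x|$, its stalk trace at a point of the $y'$-stratum is $q^{|x|}$ times the evaluation of $^\ell h_{y',x}$ from \eqref{Eqn: lKL polynomials} at the appropriate power of $q$; similarly the opposite Schubert variety $X^z$ is isomorphic (via an element of $G$ conjugating $B^-$ to $B$) to the Schubert variety $X^{w_0z}_{w_0}$ in the reversed Bruhat order, whose parity stalk at the $y$-stratum is governed by $^\ell h_{w_0 y, w_0 z}$. Multiplying by the point count $|(X^y_{y'})^\circ(\mathbb{F}_q)| = R_{y,y'}(q) = q^{(|y'|-|y|)/2} r_{y,y'}(q^{-1/2})$ and summing over $z \le y \le y' \le x$ yields exactly the claimed sum, after tracking all the powers of $q$ and making the substitution $q^{-1/2} = v$.

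Comparing the two computations gives the formula for $\grrk\mathbb{H}^\bullet(\mathscr{E}(X^z_x))$, and the fact that this polynomial (divided by $v^{|x|-|z|}$) equals the Jordan--Hölder expression from Proposition \ref{prop:the polynomial formula} — which is manifestly a polynomial in $v^2$ by Corollary \ref{cor:numerical duality} applied to the Langlands dual side, or directly from parity considerations on the $^\ell h$ and $r$ polynomials. The remaining point, that $\mathbb{H}^\bullet(\mathscr{E}(X^z_x))$ really is concentrated in even degree with the asserted Frobenius action, then follows: purity (part (1)) says the eigenvalue on $\mathbb{H}^i$ has absolute value $q^{i/2}$, while the trace formula shows the total trace is a polynomial in $q$ with nonnegative integer coefficients; matching these two descriptions degree by degree forces all odd cohomology to vanish and pins down the Frobenius on $\mathbb{H}^{2i}$ as $q^i$.

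The main obstacle I anticipate is bookkeeping rather than conceptual: keeping the three normalisation shifts consistent (the $|x|$ shift on $\mathscr{E}(X_x)$, the $|z|$-dependent shift on $X^z$ coming from $\dim X^z = |w_0| - |z|$, and the $|x|-|z|$ shift on $\mathscr{E}(X^z_x)$), and correctly handling the identification of the opposite Schubert variety $X^z$ with an ordinary Schubert variety so that its $\ell$-Kazhdan--Lusztig stalk data comes out as $^\ell h_{w_0y, w_0z}$ rather than some other index. One must also be slightly careful that the local product structure of Proposition \ref{Richardson_link} (an isomorphism only after crossing with an affine space $\mathbb{A}^{|w_0|}$) does not disturb the stalk identity in part (3) — but since stalks are insensitive to smooth factors up to a shift already absorbed into the normalisation, this is harmless. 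Everything else is a direct application of the trace formula and purity.
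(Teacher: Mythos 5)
Your proposal is correct and follows essentially the same route as the paper: the Grothendieck--Lefschetz trace formula along the stratification \eqref{Richardson_strat}, the stalk factorisation from Proposition \ref{Prop:geo properties}, the $r$-polynomial point counts of the open strata, the symmetry supplied by Corollary \ref{cor:numerical duality} and Proposition \ref{prop:the polynomial formula}, and purity to force even concentration with Frobenius acting by $q^i$ on $\mathbb{H}^{2i}$. The only step to make fully precise is your ``degree-by-degree matching'': one should run the trace identity over all finite extensions $\mathbb{F}_{q^k}$ so that purity (distinct eigenvalue absolute values in distinct degrees) excludes cancellation --- this is exactly the paper's Lemma \ref{lem:lin_alg} --- and in the trace-versus-$\grrk$ comparison the normalising factor comes out as $q^{(|x|-|z|)/2}$ rather than $q^{|x|-|z|}$, a bookkeeping point you already flagged.
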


\begin{proof}
Let us compute the trace of the $q$-th power Frobenius on $\mathbb{H}^\bullet(\mathscr{E}(X^z_x))$ using the  Grothendieck--Lefschetz trace formula \cite{De77}. This trace is given as the sum:
\[\text{Tr}_{\text{Frob}}(\mathbb{H}^\bullet(\mathscr{E}(X^z_x)))=\sum_{u\in X^z_x(\mathbb{F}_q)}\text{Tr}_{\text{Frob}}(i_u^*\mathscr{E}(X^z_x)).\]
Decomposing the $\mathbb{F}_q$-points of $X^z_x$ along our stratification, our latter two properties give that these traces are constant on the subset $(X^{y}_{y'})^\circ(\mathbb{F}_q)$, with value equal to the trace of Frobenius on $i_y^*\mathscr{E}(X^z)\otimes i_{y'}^*\mathscr{E}(X_x)$:
\begin{align*}
        \sum_{u\in X^z_x(\mathbb{F}_q)}\text{Tr}_{\text{Frob}}(i_u^*\mathscr{E}(X^z_x))=&
    \sum_{z\leq y\leq y'\leq x}|(X^{y}_{y'})^\circ(\mathbb{F}_q)|~\text{Tr}_{\text{Frob}}(i_{y}^*\mathscr{E}(X^z) \otimes i_{y'}^*\mathscr{E}(X_x))\\
    =&\sum_{z\leq y\leq y'\leq x}|(X^{y}_{y'})^\circ(\mathbb{F}_q)|~\text{Tr}_{\text{Frob}}(i_{y}^*\mathscr{E}(X^z))~  \text{Tr}_{\text{Frob}}(i_{y'}^*\mathscr{E}(X_x)).
    \end{align*}
    Recall that the point count of the open Richardson variety $(X_{y'}^{y})^{\circ}$ gives the $r_{y,y'}$ polynomial
    \[|(X^y_{y'})^\circ(\mathbb{F}_q)|=R_{y,y'}(q)=q^{\frac{|y'|-|y|}{2}}r_{y,y'}(q^{-1/2}).\]
    The trace of the Frobenius on these parity stalks gives their Poincar\'{e} polynomial \cite[\S 1.3.5]{JMW14}, which by definition are the $\ell$-Kazhdan--Lusztig polynomials:
    \begin{align*}
    \text{Tr}_{\text{Frob}}(i_{y'}^*\mathscr{E}(X_x))&=\sum_{i \in \Z}  \rank_{\mathbb{Q}_\ell} H^i (i_{y'}^*\mathscr{E}(X_{x})) q^{\frac{i}{2}}=q^{\frac{|x|-|y'|}{2}}~ {}^{\ell} h_{y',x}(q^{-1/2}) \\
    \text{Tr}_{\text{Frob}}(i_{y}^*\mathscr{E}(X^z))&=\text{Tr}_{\text{Frob}}(i_{w_0y}^*\mathscr{E}(X_{w_0 z}))=q^{\frac{|y|-|z|}{2}}~ {}^{\ell} h_{w_0y,w_0z}(q^{-1/2}). 
    \end{align*}
    This then gives the identification of traces:
    \begin{align*}
        \text{Tr}_{\text{Frob}}(\mathbb{H}^\bullet(\mathscr{E}(X^z_x)))=&\sum_{z \leq y \leq y' \leq x} q^{\frac{|y|-|z|}{2}} {}^{\ell} h_{w_0y,w_0z}(q^{-1/2})~ q^{\frac{|y'|-|y|}{2}}r_{y,y'}(q^{-1/2})~ q^{\frac{|x|-|y'|}{2}} {}^{\ell} h_{y',x}(q^{-1/2})\\
        = &~ q^{\frac{|x|-|z|}{2}}\sum_{z \leq y \leq y' \leq x} {}^{\ell} h_{w_0y,w_0z}(q^{-1/2})~r_{y,y'}(q^{-1/2})~{}^{\ell} h_{y',x}(q^{-1/2})\\
        = &~ q^{\frac{|x|-|z|}{2}}\sum_{z \leq y \leq y' \leq x} {}^{\ell} h_{w_0y,w_0z}(q^{1/2})~r_{y,y'}(q^{1/2})~{}^{\ell} h_{y',x}(q^{1/2}).
    \end{align*}
    The final symmetry follows from Corollary \ref{cor:numerical duality} and Proposition \ref{prop:the polynomial formula}. We claim this expression is a Laurent polynomial in $q$ rather than $q^{1/2}$. To see this, note that ${}^{\ell} h_{y,x}(v)$ have coefficients concentrated in a single parity, which implies the claim in view of Proposition \ref{prop:the polynomial formula}. The total parity of this expression is then seen to be even by the nontriviality of $\mathbb{H}^0(\mathscr{E}(X^z_x))$.
    The purity of the Frobenius action then implies by Lemma \ref{lem:lin_alg} that this trace is the Poincar\'{e} polynomial of $\mathbb{H}^\bullet(\mathscr{E}(X^z_x))$ with variable $q^{1/2}$, giving the theorem.
   \end{proof}

In the proof of Theorem \ref{Thm:geo argument} we required the following lemma. 

\begin{lem}\label{lem:lin_alg}
Given a semisimple linear map $F$ of degree zero on a graded $\mathbb{Q}_\ell$-vector space $V\cong \bigoplus_i V_i$, such that:
\begin{itemize}
\item The eigenvalues\footnote{Interpreted in $\overline{\mathbb{Q}}_\ell \cong \mathbb{C}$ in the usual manner.} of $F$ on $V_i$ all have absolute value $q^{i/2}$.
\item For all $k\geq 0$, the (super)trace of $F^k$ on $V$ is a Laurent polynomial $Q_V$ in $q^k$:\[\Tr_{F^k}(V)=Q_V(q^k)=\sum_i a_i q^{ki}.\]
\end{itemize}
Then $V_{2i+1}=0$, and on the $V_{2i}$ component, $F$ acts by multiplication by $q^i$, and we have \[\Tr_{F^k}(V)=Q_V(q^k)=P_V(q^{k/2})\] for the Poincar\'{e} polynomial $P_V$ of $V$ with variable $q^{1/2}$.
\end{lem}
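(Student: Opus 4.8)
The plan is to analyze the action of $F$ degree by degree using the two hypotheses, then assemble the conclusions. First I would fix a degree $i$ and look at $F|_{V_i}$. Since $F$ is semisimple, on $V_i$ it diagonalizes with eigenvalues $\lambda_{i,1}, \dots, \lambda_{i,d_i}$, each of absolute value $q^{i/2}$. Then the supertrace of $F^k$ on $V$ is $\sum_i (-1)^i \sum_j \lambda_{i,j}^k$, and by hypothesis this equals $Q_V(q^k) = \sum_m a_m q^{km}$ for all $k \geq 0$. The key device is to compare growth rates in $k$: group the eigenvalues by absolute value. For a fixed real number $\rho \geq 0$, the contribution to $\Tr_{F^k}(V)$ from all eigenvalues $\lambda$ (across all degrees, with sign) with $|\lambda| = q^{i/2}$ is, roughly, a sum $\sum (\pm) \lambda^k$ where $|\lambda|^k = q^{ki/2}$. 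Meanwhile the right-hand side $\sum_m a_m q^{km}$ only has terms of size $q^{km}$ for integer $m$. By a standard argument (e.g. letting $k \to \infty$ and peeling off the dominant terms, or using linear independence of the characters $k \mapsto \lambda^k$ for distinct $\lambda$), the eigenvalues of $F$ on all of $V$ must be exactly the integer powers $q^m$ appearing with nonzero $a_m$, and in fact for each such $m$ the eigenvalue $q^m$ occurs only in degree $V_{2m}$ (because $|q^m| = q^{2m/2}$ forces the degree to be $2m$), with total signed multiplicity $a_m$; since that degree is even, the sign is $+1$, so $a_m = \dim V_{2m} \geq 0$.

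Spelling out the central step: I would argue that the functions $k \mapsto \lambda^k$, as $\lambda$ ranges over the distinct complex numbers occurring as eigenvalues of $F$ on $V$ together with the distinct $q^m$ with $a_m \neq 0$, are linearly independent over $\C$ (Vandermonde / distinct characters of the monoid $\Z_{\geq 0}$). The identity $\sum_{i,j} (-1)^i \lambda_{i,j}^k = \sum_m a_m q^{km}$, valid for all $k \geq 0$, then forces a matching of the two sides term by term: every eigenvalue $\lambda_{i,j}$ equals some $q^m$ with $a_m \neq 0$, every $q^m$ with $a_m \neq 0$ is an eigenvalue, and for each fixed value $q^m$ the signed count $\sum_{i,j \,:\, \lambda_{i,j} = q^m} (-1)^i$ equals $a_m$. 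Now the absolute-value hypothesis pins down the degree: if $\lambda_{i,j} = q^m$ then $q^{i/2} = |q^m| = q^m$, so $i = 2m$. Hence every eigenvalue equal to $q^m$ lives in degree $2m$, the signed multiplicity is an honest multiplicity $\dim V_{2m}$, and $a_m = \dim V_{2m}$. In particular odd-degree components contribute no eigenvalues, so $V_{2i+1} = 0$; on $V_{2i}$ every eigenvalue is $q^i$, and since $F$ is semisimple, $F|_{V_{2i}} = q^i \cdot \id$. Then $\Tr_{F^k}(V) = \sum_i \dim(V_{2i}) \, q^{ki} = \sum_i \dim(V_{2i}) (q^{k/2})^{2i} = P_V(q^{k/2})$, which is the asserted formula (and shows $Q_V(t) = P_V(t^{1/2})$).

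The main obstacle is making the term-matching step fully rigorous, i.e. justifying that an identity of the form $\sum_j c_j \mu_j^k = 0$ for all $k \geq 0$, with the $\mu_j$ distinct nonzero complex numbers, forces all $c_j = 0$. This is the one genuine input; it follows from invertibility of the Vandermonde-type matrix $(\mu_j^k)_{0 \leq k \leq N-1,\; 1 \leq j \leq N}$, or alternatively from the observation that $\sum_j c_j \mu_j^k$ is the coefficient sequence of the rational function $\sum_j c_j/(1 - \mu_j t)$, whose partial-fraction decomposition is unique. Everything else is bookkeeping: pass from the hypothesized identity of traces to such a homogeneous linear relation by moving all terms to one side (collecting $(-1)^i$ from the supertrace and $-a_m$ from $Q_V$, and merging equal eigenvalues), apply the independence to conclude the coefficient of each distinct $\mu$ vanishes, and then read off the degree constraint from the absolute values. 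I would also remark at the start that it suffices to treat $k = 1$ together with the structural information, but using all $k$ is what makes the elementary independence argument available, so I would keep the hypothesis in the form stated.
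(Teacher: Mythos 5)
Your proof is correct and rests on the same key input as the paper's: the identity $\sum_k \Tr_{F^k}(V)t^k=\sum_j c_j/(1-\mu_j t)$ and the uniqueness of partial-fraction decompositions (equivalently, linear independence of the sequences $k\mapsto\mu^k$), followed by using the absolute-value hypothesis to pin each eigenvalue to a single degree. Your sign/degree bookkeeping is if anything slightly more explicit than the paper's, but the approach is essentially identical.
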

\begin{proof}
    For any linear operator $F$ on a finite dimensional vector space we have an equality of formal power series \[\sum_{k\geq 0} \Tr_{F^k}(V)t^k=\sum_{j}\frac{n_j}{1-\lambda_j t}\] where $\lambda_j$ are the generalised eigenvalues of $F$ with multiplicities $n_j$. Comparing this with \[\sum_{k\geq 0}P(q^k)t^k = \sum_i \frac{a_i}{1-q^i t}\] gives the conclusion by comparing roots and multiplicities.
\end{proof}

\subsection{Geometric extensions on Richardson varieties}
\label{subsection:geometric extensions on rich}

In this section we define the sheaves $\mathscr{E}(X^z_x)$ and prove Proposition \ref{Prop:geo properties}. The sheaves $\mathscr{E}(X^z_x)$ are a (numerical) motivic variant of the geometric extension introduced in \cite{HW23,McN18}. These sheaves exist on an arbitrary variety admitting a resolution of singularities, and may be described as follows.

\begin{prop}\label{prop:char of geo exts}
Let $Y$ be an irreducible $
\mathbb{F}_q$-variety which admits a resolution of singularities. There exists a canonical mixed $\ell$-adic sheaf $\mathscr{E}(Y)$ in $D^b_c(Y,\mathbb{Z}_\ell)$, the geometric extension, characterised by the following properties:\begin{enumerate}
    \item The sheaf $\mathscr{E}(Y)$ is an extension of the constant sheaf over the smooth locus $U$ of $Y$:
    \[\mathscr{E}(Y)|_U\cong \mathbf{1}_U.\]
    \item For \textbf{any} resolution of singularities $f:X\to Y$, the geometric extension $\mathscr{E}(Y)$ is a summand of $f_*\mathbf{1}_X$.
    \item For a resolution $f:X\to Y$, the idempotent endomorphism of $f_*\mathbf{1}_X$ cutting out $\mathscr{E}(Y)$ is in the image of the $\mathbb{Z}_\ell$-cycle class map: \[\CH_{d_Y}(X\times_Y X)\otimes \mathbb{Z}_\ell\to \hH^{\text{BM}}_{2d_Y}(X\times_Y X,\mathbb{Z}_\ell\{-d_Y\})\cong \Hom^0(f_*\mathbf{1}_X,f_*\mathbf{1}_X)\] and this idempotent does not split as a sum of smaller idempotents in this image.
\end{enumerate}
\end{prop}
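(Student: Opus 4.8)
The plan is to construct $\mathscr{E}(Y)$ by a Karoubi-completion argument applied to the decomposition theorem in the motivic/$\ell$-adic setting, using the fact that a resolution exists. First I would fix one resolution $f: X \to Y$ and consider $f_* \mathbf{1}_X \in D^b_c(Y, \mathbb{Z}_\ell)$. Over the smooth locus $U \subseteq Y$ the map $f$ restricts to something with $f_* \mathbf{1}_{f^{-1}(U)}$ receiving $\mathbf{1}_U$ as a canonical summand; I would pick out the idempotent $e_U \in \operatorname{End}(f_* \mathbf{1}_X|_U)$ projecting onto $\mathbf{1}_U$. The key step is to lift $e_U$ to an idempotent $e \in \operatorname{End}(f_*\mathbf{1}_X)$ that lies in the image of the cycle class map $\operatorname{CH}_{d_Y}(X \times_Y X) \otimes \mathbb{Z}_\ell \to \operatorname{Hom}(f_*\mathbf{1}_X, f_*\mathbf{1}_X)$. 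The ring $A := \operatorname{CH}_{d_Y}(X\times_Y X)\otimes \mathbb{Z}_\ell$ acts on $f_*\mathbf{1}_X$ via correspondences (composition of correspondences matches composition of endomorphisms under the cycle class map), and one restricts this correspondence action along the open $U$. I would then argue that the relevant localization map $A \to A_U$ is surjective (correspondences extend from $X_U \times_{Y_U} X_U$ to $X \times_Y X$ by taking closures, and the closure of the diagonal-type cycle defining $e_U$ supplies a lift of the class, though not yet an idempotent), so $e_U$ lifts to some $\tilde{e} \in A$; then use a standard idempotent-refinement lemma — since $A$ is a $\mathbb{Z}_\ell$-algebra that is $\ell$-adically complete (or module-finite over such), Hensel's lemma / successive approximation promotes the "idempotent mod the failure term" to an honest idempotent $e \in A$ still reducing to $e_U$ over $U$. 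Define $\mathscr{E}(Y) := (f_*\mathbf{1}_X, e)$ in the Karoubian envelope, or equivalently the image of $e$; property (1) and property (3)'s first half hold by construction, and I would then refine $e$ further into primitive orthogonal idempotents in the image of the cycle map so that the chosen summand is "minimal", giving the non-splitting clause.

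The heart of the matter — and the main obstacle — is \emph{independence of the resolution}, i.e. that $\mathscr{E}(Y)$ as produced above does not depend on the choice of $f: X \to Y$, which is what makes properties (2) and (3) simultaneously consistent and forces uniqueness. The standard technique is to dominate any two resolutions $f_1: X_1 \to Y$ and $f_2: X_2 \to Y$ by a common resolution $X_{12} \to Y$ (possible since a category of resolutions of $Y$ is cofiltered up to the relevant equivalence — one takes a resolution of $X_1 \times_Y X_2$), and to show that pushing forward from $X_{12}$ the constant sheaf contains the summand built on $X_1$ and the summand built on $X_2$, and that these two minimal summands coincide. The delicate point is that all the comparison isomorphisms and all the idempotents must stay in the image of the cycle class map on the various fibre products; this is exactly why the hypothesis "$Y$ admits a resolution" is paired with working with motivic (Chow-theoretic) idempotents rather than arbitrary ones, and it is the place where I expect to lean most heavily on the cited work of Hone–Williamson \cite{HW23} and McNamara \cite{McN18}. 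I would organize this as: (a) the cycle-class map is compatible with pushforward and pullback along the fibre-product diagrams; (b) for $g: X' \to X$ a proper birational map over $Y$, the summand of $f_*\mathbf{1}_X$ built via the Chow idempotent equals the summand of $(fg)_*\mathbf{1}_{X'}$ built the same way, because the unit $\mathbf{1}_X \to g_*\mathbf{1}_{X'}$ and a canonical splitting are both given by Chow correspondences (the graph of $g$ and a multiple of its transpose, using $g_* g^* = \operatorname{id}$ on the generic point); (c) conclude by the cofiltered-domination argument that the minimal summand is canonical.

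Finally I would record that $\mathscr{E}(Y)$ so defined automatically inherits pointwise purity and the purity of its hypercohomology from $f_*\mathbf{1}_X$ (which is pure of weight $0$ up to the normalization convention, being the pushforward of the constant sheaf along a proper map from a smooth variety, after the shift by $\dim X = \dim Y = d_Y$), since a summand cut out by a Frobenius-equivariant — indeed motivic, hence Frobenius-fixed up to the relevant twist — idempotent of a pure complex is pure; this is precisely what is needed for Proposition \ref{Prop:geo properties} and hence for the Frobenius-trace computation in Theorem \ref{Thm:geo argument}. I would also note the normalization: with the convention fixed just before Proposition \ref{Prop:geo properties} that $\mathscr{E}(Y)$ extends the constant sheaf in cohomological degree zero over the smooth locus (so $\mathscr{E}(X^z_x)$ is $\operatorname{IC}$ shifted by $\dim X^z_x$ when $\ell$ is good and the decomposition theorem degenerates), the sheaf is self-dual only up to a shift and twist — I would simply point out that Verdier duality sends $f_*\mathbf{1}_X$ to $f_!\omega_X = f_*\mathbf{1}_X(d_X)[2d_X]$ and permutes the Chow idempotents via transpose of correspondences, so the minimal summand is preserved up to that shift-twist, matching the warning in the text. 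The remaining clause of property (3) — that the idempotent is primitive among Chow idempotents — follows by construction in step (a)–(c) above, taking a primitive refinement at the outset and checking primitivity is preserved under the domination isomorphisms since those are themselves given by Chow correspondences.
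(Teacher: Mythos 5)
Your overall strategy --- letting Chow correspondences (more precisely, images of the $\mathbb{Z}_\ell$-cycle class map in Borel--Moore homology) act on $f_*\mathbf{1}_X$, using finite generation over $\mathbb{Z}_\ell$ to get a Krull--Schmidt-type theory, and singling out a minimal idempotent whose image extends $\mathbf{1}_U$ --- is the same as the paper's, which, like you, defers the Krull--Schmidt machinery to \cite[\S4]{HW23} and \cite{McN18}. You diverge at two points: how the idempotent is produced, and how independence of the resolution is proved. On the first point there is a genuine gap in your justification. You propose to lift the projector $e_U$ onto $\mathbf{1}_U$ along the restriction map $A\to A_U$, $A=\CH_{d_Y}(X\times_Y X)_{num,\mathbb{Z}_\ell}$, using surjectivity (fine, by the localisation sequence for Chow groups) and then ``Hensel's lemma / $\ell$-adic completeness'' to correct a lift to an honest idempotent. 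But the relevant ideal here is $\ker(A\to A_U)$, which is neither nilpotent nor contained in the $\ell$-adic ideal, and $A$ is not complete with respect to it, so Hensel-type successive approximation does not apply as stated. The conclusion you want is nevertheless true --- an algebra module-finite over $\mathbb{Z}_\ell$ is semiperfect, hence an exchange ring, so idempotents lift modulo arbitrary two-sided ideals --- but you must either invoke that fact or, as the paper does, avoid lifting entirely: the paper decomposes the identity of $(X,f)$ in the finitely generated $\mathbb{Z}_\ell$-algebra $\End_{\mathrm{Res}_{/Y}}((X,f))$ into primitive orthogonal idempotents and takes the minimal one whose image extends the constant sheaf over $U$; primitivity in the image of the cycle class map (your non-splitting clause) is then automatic rather than obtained by a subsequent refinement.

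For independence of the resolution, your route (dominate $f_1,f_2$ by a resolution of $X_1\times_Y X_2$, transport both minimal Chow-summands into the pushforward from the dominating resolution, and identify them) is workable but ultimately rests on the same Krull--Schmidt uniqueness statement in the correspondence category, and costs an extra resolution. The paper argues more directly: for two resolutions $(X,f)$ and $(Z,g)$, the fundamental class of the closure of the diagonal $\Delta(U)$ in $X\times_Y Z$ is itself a morphism in $\mathrm{Res}_{/Y}$, hence induces comparison maps between the two minimal summands in both directions; being defined symmetrically and being isomorphisms over $U$, these are mutually inverse, so the summand is canonical. Both arguments land in the same place, but you should be aware that your step (b) (identifying the summand built on $X$ with the one built on a blow-down-compatible $X'$) is not a formal consequence of having Chow-theoretic units and traces: it needs the uniqueness, up to isomorphism, of the indecomposable Chow-summand that is generically nonzero, i.e.\ exactly the Krull--Schmidt input. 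Finally, your closing remarks on purity and on Verdier self-duality up to shift and twist are not part of this proposition (the paper establishes purity separately afterwards), though they are consistent with what is needed for Proposition \ref{Prop:geo properties}.
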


\begin{proof}
    The proof of this is a slight modification of that in \cite{HW23,McN18}; we refer to them for more details. 
    Consider the category  $\text{Res}_{/Y}$ of resolutions of $Y$, with objects $f:X\to Y$ with $X$ smooth, $f$ a proper birational map, with morphisms given by the numerical Chow groups
    \[\Hom((X,f),(Z,g)):=\CH_{d_Y}(X\times_Y Z)_{num,\mathbb{Z}_\ell}\subset \text{H}_{2d_Y}^{\text{BM}}(X\times_Y Z,\mathbb{Z}_\ell\{-d_Y\})\] 
    under convolution product, where $\CH_{d_Y}(X\times_Y Z)_{num,\mathbb{Z}_\ell}$ is the image of the $\mathbb{Z}_\ell$-cycle class map to $\ell$-adic Borel--Moore homology (see \cite[\S6]{Mi80}, \cite[\S4]{De77}). 
    These homsets are finitely generated $\mathbb{Z}_\ell$-modules, so we may apply the Krull--Schmidt techniques of \cite[\S4]{HW23}. 
    In particular, for any resolution $f:X\to Y$, there is a minimal idempotent $e$ of $(X,f)$ such that $\text{Im}(f_*\mathbf{1}_X\xrightarrow{e} f_*\mathbf{1}_X)$ extends the constant sheaf over $U$. 
    For any two resolutions, the fundamental class of the closure of the diagonal $\Delta(U)$ induces a morphism between these idempotents, and this map is an isomorphism as it is defined symmetrically, and is an isomorphism over $U$. 
    The image of this idempotent is therefore independent of the resolution chosen.
\end{proof}
We obtain purity of the cohomology of $\mathscr{E}(Y)$ at once from this definition.
\begin{prop}
For any proper $Y$, the cohomology $\mathbb{H}^\bullet(\mathscr{E}(Y))\otimes_{\mathbb{Z}_\ell} \mathbb{Q}_\ell$ is pure.
\end{prop}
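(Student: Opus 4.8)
The plan is to deduce purity from the corresponding statement for a smooth proper variety, using the fact that $\mathscr{E}(Y)$ is cut out of a proper pushforward by an idempotent that is \emph{Frobenius-equivariant}. Fix a resolution of singularities $f\colon X\to Y$. Since $f$ is proper and $Y$ is proper over $\mathbb{F}_q$, the variety $X$ is proper over $\mathbb{F}_q$, and it is smooth by definition of a resolution. By the previous proposition, $\mathscr{E}(Y)$ is the image of an idempotent
\[e\in \Hom(f_*\mathbf{1}_X,f_*\mathbf{1}_X)\cong \hH^{BM}_{2d_Y}(X\times_Y X,\mathbb{Z}_\ell(-d_Y))\]
lying in the image of the $\mathbb{Z}_\ell$-cycle class map from $\CH_{d_Y}(X\times_Y X)\otimes \mathbb{Z}_\ell$.

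The key observation is then that $e$, being the class of an algebraic cycle on $X\times_Y X$ defined over $\mathbb{F}_q$, commutes with the geometric Frobenius; equivalently, it is a morphism in the category of mixed $\ell$-adic sheaves on $Y$, not merely in the constructible derived category of $Y_{\overline{\mathbb{F}}_q}$. This is the compatibility of the cycle class map with Frobenius, i.e. the fact that correspondences over $\mathbb{F}_q$ act on Weil sheaves. Consequently the decomposition $f_*\mathbf{1}_X\simeq \mathscr{E}(Y)\oplus(1-e)f_*\mathbf{1}_X$ is a decomposition of mixed sheaves; applying $\mathbb{H}^i(Y_{\overline{\mathbb{F}}_q},-)$ and tensoring with $\mathbb{Q}_\ell$ exhibits $\mathbb{H}^i(\mathscr{E}(Y))\otimes_{\mathbb{Z}_\ell}\mathbb{Q}_\ell$ as a Frobenius-stable direct summand of
\[\mathbb{H}^i(Y_{\overline{\mathbb{F}}_q},f_*\mathbf{1}_X)\otimes_{\mathbb{Z}_\ell}\mathbb{Q}_\ell\;\cong\; H^i(X_{\overline{\mathbb{F}}_q},\mathbb{Q}_\ell).\]

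Finally, by Deligne's theorem (the Riemann hypothesis part of the Weil conjectures) the cohomology $H^i(X_{\overline{\mathbb{F}}_q},\mathbb{Q}_\ell)$ of the smooth proper variety $X$ is pure of weight $i$, and a Frobenius-stable direct summand of a pure module is pure of the same weight. Hence $\mathbb{H}^i(\mathscr{E}(Y))\otimes_{\mathbb{Z}_\ell}\mathbb{Q}_\ell$ is pure of weight $i$ for every $i$, which is the assertion. The only step that is not purely formal is the Frobenius-equivariance of $e$; this is exactly the place where one uses that the defining idempotent of the geometric extension is \emph{motivic}, i.e. comes from an honest algebraic cycle rather than an arbitrary idempotent of $f_*\mathbf{1}_X$. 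Everything else reduces to Deligne's purity theorem for smooth proper varieties. (Alternatively, one could phrase the argument in the language of weight structures on $D^b_c(Y,\mathbb{Q}_\ell)$, noting that $f_*\mathbf{1}_X$ has weights $\leq 0$ and $\geq 0$ hence is pure, and that $\mathscr{E}(Y)$ inherits this as a summand, but the direct argument above suffices.)
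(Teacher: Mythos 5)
Your argument is correct and is essentially the paper's proof: $\mathbb{H}^\bullet(\mathscr{E}(Y))$ is a direct summand of $\hH^\bullet(X,\mathbb{Q}_\ell)$ for a smooth proper resolution $X$, and purity follows from Deligne. The extra discussion of Frobenius-equivariance of the idempotent is a fine elaboration (the construction already takes place in the mixed/Weil setting), but it does not change the route.
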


\begin{proof}
This cohomology is a direct summand of the cohomology $\hH^\bullet(X,\mathbb{Q}_\ell)$ for $X$ smooth and proper, which is pure by the foundational results of Deligne \cite{De80}.
\end{proof}

The construction of $\mathscr{E}(Y)$ shows the group $\mathbb{H}^0(\mathscr{E}(Y))$ is nonzero, as the canonical map \[\mathbf{1}_Y\to f_*\mathbf{1}_X\to \mathscr{E}(Y)\] is nonzero, being an isomorphism over $U$.
\begin{rem}
The geometric extension $\mathscr{E}(Y)$ is isomorphic to its Verdier dual up to a shift and twist: \[\mathbb{D}\mathscr{E}(Y)\cong \mathscr{E}(Y)(2d_Y)\{d_Y\}.\] This symmetry is a geometric incarnation of the symmetry of Corollary \ref{cor:numerical duality}.
\end{rem}

\begin{prop}\label{prop:locality of geo ext}
    For $V$ open in $Y$, the restriction of $\mathscr{E}(Y)$ to $V$ is given by $\mathscr{E}(V)$.
\end{prop}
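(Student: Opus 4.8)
The statement to prove is that $\mathscr{E}(Y)|_V \cong \mathscr{E}(V)$ for $V \subseteq Y$ open. The natural strategy is to exhibit $\mathscr{E}(Y)|_V$ as satisfying the defining properties of $\mathscr{E}(V)$ — that is, to verify it is a summand of $g_*\mathbf{1}_{X_V}$ for resolutions $g:X_V \to V$, cut out by an idempotent in the image of the cycle class map, and minimal as such. First I would fix a resolution $f:X \to Y$ and let $X_V := f^{-1}(V)$; since $f$ is proper and birational and $X$ smooth, the restriction $f_V : X_V \to V$ is again a resolution of singularities. Base change gives $(f_*\mathbf{1}_X)|_V \cong (f_V)_*\mathbf{1}_{X_V}$, so the idempotent $e$ on $f_*\mathbf{1}_X$ cutting out $\mathscr{E}(Y)$ restricts to an idempotent $e|_V$ on $(f_V)_*\mathbf{1}_{X_V}$ whose image is $\mathscr{E}(Y)|_V$.

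**Key steps.** The argument then has three pieces. (i) The restricted idempotent $e|_V$ lies in the image of the cycle class map $\CH_{d_V}(X_V \times_V X_V)_{num,\mathbb{Z}_\ell} \to \Hom((f_V)_*\mathbf{1}_{X_V}, (f_V)_*\mathbf{1}_{X_V})$: this follows from functoriality of the cycle class map under the open restriction $X\times_Y X \hookleftarrow X_V \times_V X_V$ together with compatibility of composition (convolution) with this restriction, noting $X_V \times_V X_V = (X\times_Y X) \cap (X_V \times X_V)$ is open in $X\times_Y X$. (ii) The image $\mathscr{E}(Y)|_V$ still extends the constant sheaf over the smooth locus of $V$ — this is immediate since the smooth locus $U_V$ of $V$ is (the intersection of $V$ with) the smooth locus $U$ of $Y$, and $\mathscr{E}(Y)|_U \cong \mathbf{1}_U$. (iii) Minimality: $e|_V$ is minimal among idempotents in the image of the cycle class map whose image extends $\mathbf{1}_{U_V}$. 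Here I would use the uniqueness part of the construction of $\mathscr{E}(V)$: the fundamental class of the closure of the diagonal $\Delta(U_V)$ in $X_V \times_V X_V$ induces the comparison morphism between $e|_V$ and the minimal idempotent $e_V$ defining $\mathscr{E}(V)$, and this morphism is an isomorphism because it is an isomorphism over $U_V$ and is defined symmetrically — exactly the argument already used in the proof of the existence of $\mathscr{E}(Y)$. Since $\Delta(U)|_V$ and $\Delta(U_V)$ have the same closure in $X_V \times_V X_V$, the comparison map is literally the restriction of the one over $Y$, so it is again an isomorphism, forcing $\mathscr{E}(Y)|_V \cong \mathscr{E}(V)$.

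**Main obstacle.** The bookkeeping with the cycle class map and numerical/convolution structure under open restriction is the delicate point: one must check that restricting correspondences to the open subset is compatible with the convolution product (so that idempotents restrict to idempotents in the right Hom-category) and that a numerically trivial cycle on $X\times_Y X$ can only restrict to a numerically trivial cycle on $X_V \times_V X_V$ — or at least that the relevant minimality is preserved. This is a standard but slightly fiddly verification in intersection theory; alternatively one can sidestep part of it by appealing directly to the uniqueness clause in the construction of geometric extensions (the diagonal-class argument) and only needs that $e|_V$ is \emph{some} idempotent in the image of the cycle class map extending the constant sheaf, not necessarily a minimal one, since minimality is then forced by the comparison isomorphism with $\mathscr{E}(V)$.
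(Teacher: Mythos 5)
Your steps (i) and (ii) are fine and agree with the paper's setup: $e|_V$ is an idempotent in the image of the cycle class map on $\CH_{d_Y}(f^{-1}(V)\times_V f^{-1}(V))\otimes\mathbb{Z}_\ell$, and its image $\mathscr{E}(Y)|_V$ extends $\mathbf{1}$ over the smooth locus of $V$. The gap is in step (iii), and it is the whole content of the proposition. The diagonal-class comparison argument you invoke only works when \emph{both} idempotents being compared are primitive in the relevant image algebras, because the conclusion ``symmetrically defined $+$ isomorphism over the smooth locus $\Rightarrow$ isomorphism'' rests on the locality of the endomorphism algebras cut out by those idempotents. Applied to the pair $(e|_V, e_V)$ without knowing $e|_V$ is primitive, the argument only produces a split injection $\mathscr{E}(V)\hookrightarrow \mathscr{E}(Y)|_V$ (which the paper takes as its starting point: ``$\mathscr{E}(V)$ occurs as a summand of $\mathscr{E}(Y)|_V$ by minimality''); it cannot rule out $\mathscr{E}(Y)|_V\cong \mathscr{E}(V)\oplus\mathscr{F}$ with $\mathscr{F}$ supported off the smooth locus and cut out by a cycle class. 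Your closing claim that ``minimality is then forced by the comparison isomorphism with $\mathscr{E}(V)$'' is therefore circular: the comparison map is not known to be an isomorphism until the primitivity of $e|_V$ is established, and the characterising property (3) of the geometric extension is exactly the statement you are trying to avoid verifying.

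The missing ingredient, which your proposal never uses, is the surjectivity of the restriction map on correspondence algebras. The paper proves it via the localisation exact sequence for Chow groups: $\CH_{d_Y}(X\times_Y X)\otimes\mathbb{Z}_\ell \twoheadrightarrow \CH_{d_Y}(f^{-1}(V)\times_V f^{-1}(V))\otimes\mathbb{Z}_\ell$, hence the induced map $\pi\colon e\text{-side algebra over }Y \to \text{algebra over }V$ is surjective. With this, a hypothetical splitting $\pi(e)=e_1+e_2$ in the image of the cycle class map over $V$ can be \emph{lifted}: choose $\tilde e_1$ with $\pi(\tilde e_1)=e_1$, and consider $e\cdot\tilde e_1\cdot e$ in the local ring $e\cdot\End_{\mathrm{Res}_{/Y}}((X,f))\cdot e$ (local precisely because $e$ is the minimal idempotent over $Y$). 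It is a unit or lies in the maximal ideal; by symmetry assume it is a unit, whence $e_1=\pi(e\cdot\tilde e_1\cdot e)$ is a unit as well as an idempotent in $\pi(e)\cdot\End_{\mathrm{Res}_{/V}}\cdot\pi(e)$, forcing $e_1=\pi(e)$. This lifting step is what transfers minimality from $Y$ to $V$; without surjectivity of $\pi$ (or some substitute), your outline does not close.
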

\begin{proof}
    By the minimality of the construction,  $\mathscr{E}(V)$ occurs as a summand of $\mathscr{E}(Y)|_V$, so it suffices to show this sheaf does not split upon restriction to $V$. For a resolution $f:X\to Y$, with idempotent $e:f_*\mathbf{1}_X\to f_*\mathbf{1}_X$ cutting out $\mathscr{E}(Y)$, we need to check that $e$ cannot be decomposed over $V$ into \[e|_V=e_1+e_2\] where $e_i$ are in the image of a $\mathbb{Z}_\ell$-cycle. For this, we have the commutative diagram \[\begin{tikzcd} \CH_{d_Y}(X\times_Y X)\otimes \mathbb{Z}_\ell\arrow[rr,bend left]\arrow[d,two heads]\arrow[r,two heads]&\End_{\text{Res}{_{/Y}}}((X,f))\arrow[r,hook]\arrow[d,"\pi",two heads]&\Hom^0(f_*\mathbf{1}_X,f_*\mathbf{1}_X)\arrow[d]\\
    \CH_{d_Y}(f^{-1}(V)\times_Vf^{-1}(V))\otimes \mathbb{Z}_\ell\arrow[rr,bend right]\arrow[r,two heads]&\End_{\text{Res}{_{/V}}}((f^{-1}(V),f))\arrow[r,hook]&\Hom^0(f_*\mathbf{1}_{f^{-1}(V)},f_*\mathbf{1}_{f^{-1}(V)})
    \end{tikzcd}\] This left map is surjective by the localisation sequence in Chow groups, implying surjectivity of $\pi$. If the idempotent $e$ splits under $\pi$, so $\pi(e)=e_1+e_2$ with $\pi(e)\cdot e_1=e_1$, we may find $\tilde{e}_1\in \End_{\text{Res}{_{/Y}}}((X,f))$ with $\pi(\tilde{e}_1)=e_1$. The algebra $e\cdot \End_{\text{Res}{_{/Y}}}((X,f))\cdot e$ is local by definition, so the element $e\cdot \tilde{e}_1\cdot e$ is either a unit or contained in the unique maximal ideal. By symmetry we may assume $e\cdot \tilde{e}_1\cdot e$ is a unit, so \[e_1=\pi(e)\cdot \pi(\tilde{e}_1)\cdot \pi(e)=\pi(e\cdot \tilde{e}_1\cdot e)\] is a unit, giving $e_1=1$, as desired.
\end{proof}

\begin{rem}
The locality property of Proposition \ref{prop:locality of geo ext} necessitated the motivic variant of the geometric extension, as opposed to that of \cite{HW23, McN18}. We do not know whether this property holds for the minimal dense indecomposable summand of $f_*\mathbf{1}_X$ in full generality.
\end{rem}

\begin{prop}\label{prop:geo is parity}
For a Schubert variety $X_w$, the geometric extension $\mathscr{E}(X_w)$ is the indecomposable parity sheaf on $X_w$, shifted to be the constant sheaf in degree $0$ over the smooth locus.
\end{prop}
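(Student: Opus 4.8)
The plan is to identify $\mathscr{E}(X_w)$ with the indecomposable parity sheaf $\mathscr{E}_w$ on $X_w$ by exploiting the Bott--Samelson resolution, the key point being that Bott--Samelson varieties have \emph{cellular} (even) cohomology, so the numerical Chow-theoretic morphisms used to define $\mathscr{E}(Y)$ coincide with all morphisms in the relevant category. Concretely, I would proceed as follows. First, fix a reduced expression $\underline{w}$ for $w$ and let $f : \mathrm{BS}(\underline{w}) \to X_w$ be the associated Bott--Samelson resolution; $\mathrm{BS}(\underline{w})$ is smooth projective with an affine paving, so $f_* \mathbf{1}_{\mathrm{BS}(\underline{w})}$ is a $B$-constructible parity complex on $X_w$ (this is exactly how parity sheaves are produced in \cite[\S4.1]{JMW14}), and by the Decomposition-Theorem-style argument there it contains a unique indecomposable summand $\mathscr{E}_w$ which restricts to the constant sheaf on the open stratum $BwB/B$.

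Second, I would compare the endomorphism algebras. For Bott--Samelson resolutions $f : X \to X_w$ and $g : Z \to X_w$ the fibre product $X \times_{X_w} Z$ has a $B$-invariant affine paving (one checks this using the standard cell decompositions of Bott--Samelson varieties and their maps to $G/B$), hence its Borel--Moore homology is generated by fundamental classes of closures of cells, i.e. the cycle class map $\CH_{d}(X\times_{X_w}Z)\otimes\mathbb{Z}_\ell \to \hH^{BM}_{2d}(X\times_{X_w}Z,\mathbb{Z}_\ell(-d))$ is surjective and, moreover, numerical equivalence is trivial in this cellular setting, so $\Hom_{\mathrm{Res}_{/X_w}}((X,f),(Z,g)) = \Hom(f_*\mathbf{1}_X, g_*\mathbf{1}_Z)$ in $D^b_c(X_w,\mathbb{Z}_\ell)$. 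Therefore the Krull--Schmidt decomposition taken inside $\mathrm{Res}_{/X_w}$ agrees with the genuine Krull--Schmidt decomposition of $f_*\mathbf{1}_X$ in the additive category of parity complexes, and the minimal dense idempotent of Proposition~\ref{prop:locality of geo ext} cuts out precisely the parity sheaf $\mathscr{E}_w$. Matching normalisations: after the shift fixed in Section~\ref{Subsec: geometric argument}, both sheaves are the constant sheaf in degree $0$ over the smooth locus, so the identification is on the nose, giving $\mathscr{E}(X_w) \cong \mathscr{E}_w[\,|w|\,]$ in the paper's conventions.

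The main obstacle is the cellularity claim for the fibre products $X\times_{X_w}Z$ of two Bott--Samelson resolutions (or, more generally, of a Bott--Samelson resolution with itself), and the accompanying assertion that numerical and homological equivalence coincide there so that $\CH_d(X\times_{X_w}Z)_{num,\mathbb{Z}_\ell}$ maps \emph{isomorphically} onto $\hH^{BM}_{2d}$; one must rule out that passing to numerical equivalence kills morphisms. This can be handled either by a direct stratification argument (the map $X\times_{X_w}Z \to G/B\times G/B$ has image in a union of $B$-orbits and one pulls back the Bruhat paving), or by invoking that Bott--Samelson varieties and their fibre products are paved by affines, whence all their (Borel--Moore) homology is algebraic, even, and torsion-free, so numerical equivalence is automatically trivial. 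Once this is in place, the rest is bookkeeping with shifts and the uniqueness part of Krull--Schmidt; combined with Proposition~\ref{prop:locality of geo ext} this also recovers the stalk formula of Proposition~\ref{Prop:geo properties}(3) on Schubert varieties, which is the input needed for the Richardson case via the Knutson--Woo--Yong local model (Proposition~\ref{Richardson_link}).
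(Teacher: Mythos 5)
Your proposal is correct and follows essentially the same route as the paper: take the Bott--Samelson resolution, use the affine paving of the self-fibre product to see that the $\mathbb{Z}_\ell$-cycle class map hits all of $\Hom(f_*\mathbf{1},f_*\mathbf{1})$, conclude the minimal dense idempotent is indecomposable in $D^b_c(X_w,\mathbb{Z}_\ell)$, and identify it with $\mathscr{E}_w$ up to the normalising shift. The ``main obstacle'' you flag is lighter than you fear, since the paper's hom-spaces are by definition the image of the cycle class map in Borel--Moore homology (so no separate comparison of numerical versus homological equivalence is needed), and only the self-fibre product of a single Bott--Samelson resolution is required because $\mathscr{E}(X_w)$ is already resolution-independent by construction.
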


\begin{proof}
The Bott--Samelson resolution $X_{\underline{w}}\to X_w$, for a reduced word $\underline{w}$ of $w$, has the property that the fibre product $X_{\underline{w}}\times_{X_w} X_{\underline{w}}$ admits an affine paving. As such, the $\mathbb{Z}_\ell$-cycle class map is an isomorphism, so we deduce that the geometric extension is indecomposable in the ambient constructible derived category. The geometric extension is therefore a dense indecomposable parity sheaf on $X_w$, so is the parity sheaf $\mathscr{E}_w$ up to the prescribed shift.
\end{proof}

\begin{cor}\label{cor:kunneth}
On Schubert varieties $X_w$ and $X_u$, we have an isomorphism: \[\mathscr{E}(X_w)\boxtimes \mathscr{E}(X_u)\cong \mathscr{E}(X_w\times X_u).\]
\end{cor}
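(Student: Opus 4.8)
The plan is to realise both sheaves as direct summands of the pushforward along one common resolution, and then to invoke the characterisation of the geometric extension. First I would fix reduced words $\underline{w}$ and $\underline{u}$ for $w$ and $u$ and pass to the product of the corresponding Bott--Samelson resolutions, $f := f_{\underline{w}} \times f_{\underline{u}} \colon X_{\underline{w}} \times X_{\underline{u}} \to X_w \times X_u$. This is again a resolution of singularities (proper, birational, with smooth source), so $\mathscr{E}(X_w \times X_u)$ is defined; moreover its iterated fibre product decomposes as
\[
(X_{\underline{w}} \times X_{\underline{u}}) \times_{X_w \times X_u} (X_{\underline{w}} \times X_{\underline{u}}) \;\cong\; (X_{\underline{w}} \times_{X_w} X_{\underline{w}}) \times (X_{\underline{u}} \times_{X_u} X_{\underline{u}}).
\]
Each factor on the right admits an affine paving, as recalled in the proof of Proposition \ref{prop:geo is parity}, hence so does the product, and therefore the $\mathbb{Z}_\ell$-cycle class map for this variety is an isomorphism.

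Next I would transport idempotents across the Künneth isomorphism $f_* \mathbf{1} \cong (f_{\underline{w}})_* \mathbf{1} \boxtimes (f_{\underline{u}})_* \mathbf{1}$. Let $e_{\underline{w}}$ and $e_{\underline{u}}$ be the numerical cycle classes cutting out $\mathscr{E}(X_w)$ and $\mathscr{E}(X_u)$ inside $(f_{\underline{w}})_* \mathbf{1}$ and $(f_{\underline{u}})_* \mathbf{1}$ respectively. Their external product $e_{\underline{w}} \times e_{\underline{u}}$ is a numerical cycle class on the iterated fibre product above, and it acts on $f_* \mathbf{1}$ as the idempotent $e_{\underline{w}} \boxtimes e_{\underline{u}}$, whose image is $\mathscr{E}(X_w) \boxtimes \mathscr{E}(X_u)$. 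Since the smooth locus of $X_w \times X_u$ is the product of the smooth loci of the two factors, this summand restricts to the constant sheaf there. Hence $\mathscr{E}(X_w) \boxtimes \mathscr{E}(X_u)$ is a direct summand of $f_* \mathbf{1}$, cut out by an idempotent in the image of the $\mathbb{Z}_\ell$-cycle class map, that restricts to the constant sheaf over the smooth locus.

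It then suffices to check that $\mathscr{E}(X_w) \boxtimes \mathscr{E}(X_u)$ is indecomposable: granting this, it must coincide with $\mathscr{E}(X_w \times X_u)$, which is characterised as the unique (up to isomorphism) indecomposable summand of $f_* \mathbf{1}$ of this kind. For the indecomposability, note that by Proposition \ref{prop:geo is parity} both $\mathscr{E}(X_w)$ and $\mathscr{E}(X_u)$ are cohomological shifts of indecomposable parity sheaves on Schubert varieties, whose stalks are free $\mathbb{Z}_\ell$-modules; hence the stalks of $\mathscr{E}(X_w) \boxtimes \mathscr{E}(X_u)$ are free as well. Extending scalars to $\mathbb{Q}_\ell$, where parity sheaves on Schubert varieties are intersection cohomology sheaves, one gets $(\mathscr{E}(X_w) \boxtimes \mathscr{E}(X_u)) \otimes \mathbb{Q}_\ell \cong \mathrm{IC}(X_w) \boxtimes \mathrm{IC}(X_u) \cong \mathrm{IC}(X_w \times X_u)$, which is indecomposable; and a complex with free $\mathbb{Z}_\ell$-stalks which is indecomposable after extension of scalars to $\mathbb{Q}_\ell$ is already indecomposable over $\mathbb{Z}_\ell$. (Alternatively, one may simply combine Proposition \ref{prop:geo is parity} with the fact that the external product of the indecomposable parity sheaves on the factors is the indecomposable parity sheaf on the product, after matching normalisations.)

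The step I expect to be the main obstacle is the transport of idempotents in the second paragraph: one must verify that external products of algebraic cycles are compatible with the convolution action of correspondences on the pushforwards $(f_{\underline{w}})_* \mathbf{1}$ and $(f_{\underline{u}})_* \mathbf{1}$ — so that $e_{\underline{w}} \times e_{\underline{u}}$ genuinely acts as $e_{\underline{w}} \boxtimes e_{\underline{u}}$ under Künneth — and one must keep track of the cohomological shifts relating the normalised sheaves $\mathscr{E}(X_w)$ to the parity sheaves $\mathscr{E}_w$ so that both sides of the asserted isomorphism are shifted in the same way.
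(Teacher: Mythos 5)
Your strategy is the one the paper itself follows: the paper's (very brief) proof offers exactly your two ingredients, namely the reduction to the corresponding statement for parity sheaves (your parenthetical alternative) and the direct argument via the product Bott--Samelson resolution, using that its self fibre product is the product of the two Bott--Samelson fibre products and that the K\"unneth map on Chow groups is an isomorphism because these varieties admit affine pavings. Your transport of the idempotent $e_{\underline{w}}\times e_{\underline{u}}$ through the cycle class map and the K\"unneth isomorphism of pushforwards is the fleshed-out version of the paper's one-line appeal to $\CH_\bullet((X_{\underline{w}}\times_{X_w}X_{\underline{w}})\times(X_{\underline{u}}\times_{X_u}X_{\underline{u}}))\cong\CH_\bullet(X_{\underline{w}}\times_{X_w}X_{\underline{w}})\otimes\CH_\bullet(X_{\underline{u}}\times_{X_u}X_{\underline{u}})$, and is fine.

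The one step that fails is your primary indecomposability argument via extension of scalars to $\mathbb{Q}_\ell$. The coefficients here are $\mathbb{Z}_\ell$ with $\ell$ the fixed (possibly torsion) prime, and by Proposition \ref{prop:geo is parity} the stalks of $\mathscr{E}(X_w)$ are those of the $\mathbb{Z}_\ell$-parity sheaf, whose $\mathbb{Q}_\ell$-ranks are the coefficients of ${}^{\ell}h_{y,w}$ (this is exactly how they enter the trace computation in Theorem \ref{Thm:geo argument}). Consequently $\mathscr{E}(X_w)\otimes_{\mathbb{Z}_\ell}\mathbb{Q}_\ell$ is a $\mathbb{Q}_\ell$-parity complex with stalk dimensions ${}^{\ell}h_{y,w}$; it is isomorphic to $\mathrm{IC}(X_w,\mathbb{Q}_\ell)$ only when ${}^{\ell}h_{y,w}=h_{y,w}$ for all $y$, and whenever $\ell$ is a torsion prime for $X_w$ it decomposes as $\mathrm{IC}(X_w)$ plus shifted $\mathrm{IC}$'s of smaller Schubert varieties. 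So the premise ``indecomposable after $\otimes\,\mathbb{Q}_\ell$'' is false precisely in the modular situations the corollary must cover, and your implication (correct as stated, given free stalks) has nothing to apply to. Your parenthetical fallback is the right argument and is the one the paper intends: combine Proposition \ref{prop:geo is parity} with the fact that the external product of the indecomposable parity sheaves on $X_w$ and $X_u$ is the indecomposable parity sheaf on $X_w\times X_u$ (equivalently, rerun your Chow argument: since the cycle class map for the product Bott--Samelson fibre product is an isomorphism, primitivity of the idempotent in the image of the cycle class map is the same as indecomposability in the derived category, and the tensor-product description of the endomorphism algebra shows $e_{\underline{w}}\otimes e_{\underline{u}}$ is primitive). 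With the $\mathbb{Q}_\ell$ step deleted and that argument put in its place, your proof matches the paper's.
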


\begin{proof}
This follows from the corresponding statement for parity sheaves, and may be seen directly via the isomorphism 
\begin{align*}
		\CH_\bullet((X_{\underline{w}}\times_{X_w} X_{\underline{w}})\times (X_{\underline{u}}\times_{X_u} X_{\underline{u}}))
		\cong \CH_\bullet(X_{\underline{w}}\times_{X_w} X_{\underline{w}})\otimes \CH_\bullet(X_{\underline{u}}\times_{X_u} X_{\underline{u}}),
\end{align*}
which holds as the relevant varieties admit affine pavings.
\end{proof}

\begin{prop}\label{prop:homotopy invariance}
The geometric extension on the product $Y\times \mathbb{A}^n$ is given by the external product of $\mathscr{E}(Y)$ and the constant sheaf:

\[\mathscr{E}(Y\times \mathbb{A}^n)\cong \mathscr{E}(Y)\boxtimes \mathbf{1}_{\mathbb{A}^n}.\]
\end{prop}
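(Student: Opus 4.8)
The plan is to exploit the characterisation of the geometric extension via resolutions, together with the locality and uniqueness built into the construction. First I would observe that $\mathscr{E}(Y)\boxtimes \mathbf{1}_{\mathbb{A}^n}$ restricts to the constant sheaf over the smooth locus of $Y\times\mathbb{A}^n$, which is precisely $U\times\mathbb{A}^n$ where $U$ is the smooth locus of $Y$; so by property (1) it is a candidate, and it suffices to show it is cut out by a minimal idempotent in the image of a $\mathbb{Z}_\ell$-cycle. For this I would take a resolution of singularities $f:X\to Y$ and form $f\times\id : X\times\mathbb{A}^n \to Y\times\mathbb{A}^n$, which is again a resolution of singularities (it is proper, $X\times\mathbb{A}^n$ is smooth, and it is birational since $f$ is). By the projection/Künneth formula $(f\times\id)_*\mathbf{1}_{X\times\mathbb{A}^n}\cong (f_*\mathbf{1}_X)\boxtimes\mathbf{1}_{\mathbb{A}^n}$.

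Next I would identify the relevant $\mathrm{Hom}$-algebras. The fibre product of $X\times\mathbb{A}^n$ with itself over $Y\times\mathbb{A}^n$ is $(X\times_Y X)\times\mathbb{A}^n$, and the key point is that the $\mathbb{Z}_\ell$-cycle class map and the numerical Chow group are compatible with multiplying by affine space: by homotopy invariance of Chow groups (and of Borel–Moore homology, with the appropriate twist and shift) one has
\begin{align*}
\CH_{d_Y}(X\times_Y X)_{num,\mathbb{Z}_\ell}
\;\xrightarrow{\;\sim\;}\;
\CH_{d_Y+n}\big((X\times_Y X)\times\mathbb{A}^n\big)_{num,\mathbb{Z}_\ell},
\end{align*}
and this isomorphism is compatible with the convolution (composition) product, hence is an isomorphism of endomorphism algebras $\End_{\mathrm{Res}_{/Y}}((X,f))\xrightarrow{\sim}\End_{\mathrm{Res}_{/Y\times\mathbb{A}^n}}((X\times\mathbb{A}^n,f\times\id))$, intertwining the inclusions into $\Hom((f_*\mathbf{1}_X)^{\boxtimes},-)$ with $-\boxtimes\mathbf{1}_{\mathbb{A}^n}$. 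Under this identification the minimal idempotent $e$ cutting out $\mathscr{E}(Y)$ is sent to an idempotent cutting out $\mathscr{E}(Y)\boxtimes\mathbf{1}_{\mathbb{A}^n}$ inside $(f\times\id)_*\mathbf{1}_{X\times\mathbb{A}^n}$; since the algebras are isomorphic, the image of $e$ is still a local ring, so the image idempotent is still minimal and does not split as a sum of smaller idempotents in the image of a $\mathbb{Z}_\ell$-cycle. By the uniqueness in the defining proposition, $\mathscr{E}(Y\times\mathbb{A}^n)\cong \mathscr{E}(Y)\boxtimes\mathbf{1}_{\mathbb{A}^n}$.

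The main obstacle I anticipate is verifying cleanly that pullback along $(X\times_Y X)\times\mathbb{A}^n\to X\times_Y X$ induces an isomorphism on \emph{numerical} $\mathbb{Z}_\ell$-cycles that is moreover a ring homomorphism for the convolution product. Homotopy invariance $\CH_*(Z)\cong\CH_{*+n}(Z\times\mathbb{A}^n)$ is standard, but one must check it passes to the quotient by numerical (or cycle-class-kernel) equivalence and is compatible with the correspondence composition $(\alpha,\beta)\mapsto p_{13*}(p_{12}^*\alpha\cdot p_{23}^*\beta)$; this amounts to a diagram chase using flat pullback, proper pushforward, and the projection formula for the extra $\mathbb{A}^n$ factor, together with the compatibility of the cycle class map with all these operations (as in \cite[\S6]{Mi80}, \cite[\S4]{De77}). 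Alternatively, and perhaps more transparently, one can avoid numerical equivalence entirely by appealing to Proposition \ref{prop:locality of geo ext}: cover $Y$ by opens on which the statement is clear and glue — but since $\mathbb{A}^n$ is not itself a shrinking of $Y$, the cycle-theoretic argument above seems the most direct route.
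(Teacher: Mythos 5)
Your argument is correct and is essentially the paper's proof: cross a resolution $f:X\to Y$ with $\mathbb{A}^n$, use homotopy invariance of Chow groups and of sheaf $\Hom$'s to identify $\End_{\mathrm{Res}_{/Y}}((X,f))$ with $\End_{\mathrm{Res}_{/Y\times\mathbb{A}^n}}((X\times\mathbb{A}^n,f\times\id))$, and transport the minimal idempotent to conclude by the characterisation of the geometric extension. The compatibility issue you flag at the end is resolved exactly as the paper does it, via the commutative square relating the cycle class maps before and after crossing with $\mathbb{A}^n$: since both vertical homotopy-invariance maps are isomorphisms, the induced map on the images of the horizontal maps (the convolution algebras) is automatically an algebra isomorphism, with no separate check of descent to the cycle-class image needed.
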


\begin{proof}
For a resolution $f:X\to Y$, taking products with $\mathbb{A}^n$ we obtain a commutative square:
\[\begin{tikzcd} \CH_{d_Y}(X\times_Y X)\arrow[r]\arrow[d,"\sim"]&\Hom^0(f_*\mathbf{1}_X,f_*\mathbf{1}_X)\arrow[d,"\sim"]\\
    \CH_{d_Y+n}((X\times \mathbb{A}^n)\times_{Y\times\mathbb{A}^n}(X\times \mathbb{A}^n))\arrow[r]&\Hom^0((f\times \id)_*\mathbf{1}_{X\times \mathbb{A}^n},(f\times \id)_*\mathbf{1}_{X\times \mathbb{A}^n})
    \end{tikzcd}\]
These vertical maps are isomorphisms by homotopy invariance of Chow groups and sheaves respectively. As such, the induced map on images is an isomorphism:
\[\End_{\text{Res}_{/Y}}(X,f)\xrightarrow{\sim} \End_{\text{Res}_{/{Y\times \mathbb{A}^n}}}(X\times \mathbb{A}^n,f\times \text{id}).\]
That this algebra isomorphism is induced by crossing with $\mathbb{A}^n$ then gives the claim.
\end{proof}
    We now prove the desired stalk description, using the local description of Proposition \ref{Richardson_link}.

\begin{prop}\label{prop:stalks agree}
The geometric extension on the Richardson variety is pointwise pure, and we can identify the stalk of $\mathscr{E}(X^z_x)$ at a point $u$ in $(X^y_{y'})^\circ\subset X^z_x$ with: \[i_u^*\mathscr{E}(X^z_x)\cong i_{y'}^*\mathscr{E}(X_x)\otimes i_y^*\mathscr{E}(X^z).\]
\end{prop}

\begin{proof}
Our previous lemmas give that these stalks are $\mathbb{A}^n$ homotopy invariant, Zariski local, and are locally described by the stalks of parity sheaves on Schubert varieties. We therefore have the following isomorphisms:
\begin{align*}
i^*_u\mathscr{E}({X^z_x})&\cong i^*_{u,0}(\mathscr{E}(X^z_x) \boxtimes \mathbf{1}_{\mathbb{A}^{|w_0|}})\\
&\cong i^*_{u,0}(\mathscr{E}(X_z^x\times \mathbb{A}^{|w_0|}))&&\text{Proposition \ref{prop:homotopy invariance}}\\
&\cong i^*_{u_1,u_2}(\mathscr{E}(X_x\times X^z))&&\text{Propositions \ref{prop:locality of geo ext} and \ref{Richardson_link}}\\
&\cong i^*_{y',y}\mathscr{E}(X_x\times X^z)&&\text{Constructibility along Schubert stratification}\\
&\cong i^*_{y'}(\mathscr{E}(X_x))\otimes i^*_y(\mathscr{E}(X^z))&&\text{Corollary \ref{cor:kunneth}.}
\end{align*}
\end{proof}

\begin{cor}
The sheaf $\mathscr{E}(X^z_x)$ is pointwise pure, and the eigenvalues of Frobenius on each stalk are powers of $q$.
\end{cor}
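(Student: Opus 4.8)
The plan is to deduce this corollary directly from the stalk description of Proposition \ref{stalks}, reducing everything to the well-understood case of parity sheaves on Schubert varieties. The only inputs needed beyond that proposition are the pointwise purity of parity sheaves \cite[\S2.2, \S4.1]{JMW14} and the fact that the stalk cohomology of a parity sheaf on a (Kac--Moody) flag variety is of Tate type, i.e. the Frobenius eigenvalues on $H^i(i_{y'}^*\mathscr{E}(X_x))$ are genuine integer powers of $q$ rather than merely Weil numbers of weight $i$.

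First I would recall why the parity stalks are of Tate type: a Schubert variety $X_x$ admits a Bott--Samelson resolution $X_{\underline{x}}\to X_x$ whose source and whose fibre products $X_{\underline{x}}\times_{X_x}X_{\underline{x}}$ admit affine pavings, so $\mathscr{E}(X_x)$ is a summand of $f_*\mathbf{1}_{X_{\underline{x}}}$ and each stalk $i_{y'}^*\mathscr{E}(X_x)$ is a summand of a complex whose cohomology is a sum of Tate twists; equivalently, this is the assertion that the $\ell$-Kazhdan--Lusztig polynomials ${}^{\ell}h_{y',x}(v)$ of Equation \eqref{Eqn: lKL polynomials} are honest polynomials in $v$ whose coefficients are concentrated in a single parity. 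With our normalisation (the constant sheaf in degree $0$ over the smooth locus), this means $H^i(i_{y'}^*\mathscr{E}(X_x))$ vanishes for $i$ odd, and Frobenius acts on $H^{2j}(i_{y'}^*\mathscr{E}(X_x))$ by $q^{j}$; the same applies to $\mathscr{E}(X^z)$ via the identification $i_y^*\mathscr{E}(X^z)\cong i_{w_0y}^*\mathscr{E}(X_{w_0z})$ used in the proof of Theorem \ref{Thm:geo argument}.

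Then I would invoke Proposition \ref{stalks}: for $u\in(X^y_{y'})^\circ\subset X^z_x$ one has $i_u^*\mathscr{E}(X^z_x)\cong i_{y'}^*\mathscr{E}(X_x)\otimes i_y^*\mathscr{E}(X^z)$, with Frobenius acting diagonally. A tensor product of two complexes, each pointwise pure (weight $=$ cohomological degree) and of Tate type concentrated in even degrees, is again pointwise pure and concentrated in even degrees: $H^{2k}(i_u^*\mathscr{E}(X^z_x))\cong\bigoplus_{i+j=k}H^{2i}(i_{y'}^*\mathscr{E}(X_x))\otimes H^{2j}(i_y^*\mathscr{E}(X^z))$, on which Frobenius acts by $q^i\cdot q^j=q^k$. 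Running over all strata $(X^y_{y'})^\circ$ of the stratification \eqref{Richardson_strat} exhausts the points of $X^z_x$, so $\mathscr{E}(X^z_x)$ is pointwise pure with all stalk Frobenius eigenvalues equal to integer powers of $q$, proving the corollary (and establishing property (2) of Proposition \ref{Prop:geo properties}).

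There is essentially no serious obstacle here: this is a bookkeeping consequence of Proposition \ref{stalks}. The only point deserving a sentence of care is the passage from ``pure of weight $i$'' to ``Frobenius acts by an integer power of $q$'' for the parity stalks, which is exactly the Tate-type content encoded in the fact that ${}^{\ell}h_{y,x}$ is a polynomial in $v$ (not $v^{1/2}$) — the same observation already used to control parities in the proof of Theorem \ref{Thm:geo argument}.
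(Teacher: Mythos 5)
Your proposal is correct and follows the same route as the paper: the paper's proof is a one-line reduction via Proposition \ref{stalks} to the corresponding statement for parity sheaf stalks on Schubert varieties, citing \cite[1.3.5]{JMW14}, which is exactly what you do (with the Bott--Samelson/affine-paving and tensor-product bookkeeping spelled out explicitly).
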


\begin{proof}
This local statement holds since it holds for parity sheaves \cite[1.3.5]{JMW14}. Explicitly, the stalks of the parity sheaf are direct summands of the cohomology of the fibres of any Bott--Samelson resolution of $X_x$. These fibres are complete and admit affine pavings, so the Frobenius on their cohomology has the desired properties by reduction to the case of $\mathbb{A}^n$.
\end{proof}

This verifies the properties stated in Proposition \ref{Prop:geo properties}.
The argument in Theorem \ref{Thm:geo argument} then exhibits the ranks of cohomology as the polynomial expression for the tilting multiplicities of Proposition \ref{prop:the polynomial formula}. From this we deduce Theorem \ref{thm:main thm}.

\subsection{A tensor product description of the geometric extension}
\label{Ssec: Tensor product}

We conclude this section with a more explicit description of this geometric extension, as a tensor product of parity sheaves for the Bruhat and opposite Bruhat stratification.
\begin{thmp}{\ref{Thm: Tensor product}}
\emph{
The geometric extension on the Richardson variety is the tensor product \[\mathscr{E}^z_x\cong \mathscr{E}_x\otimes \mathscr{E}^z\] where $\mathscr{E}_x$ and $\mathscr{E}^z$ denote dense indecomposable parity sheaves on the Schubert and opposite Schubert varieties respectively.}
\end{thmp}

\begin{proof}
For notational simplicity we write $f_* \mathbf{1}$ for the pushforward of the constant sheaf on the domain of $f$.
By Proposition \ref{Prop:res of richardsons}, the fibre product (in the flag variety) of the Bott--Samelson resolutions $\pi_{\underline{x}}: X_{\underline{x}}\to X_x$ and $\pi^{\underline{z}}:X^{\underline{z}}\to X^z$ is smooth, and yields a resolution \[\pi_{\underline{x}}^{\underline{z}}:X^{\underline{z}}_{\underline{x}}\to X^z_x\] of the Richardson variety. 
The (normalised) parity sheaves $\mathscr{E}(X_x)$ and $\mathscr{E}(X^z)$ are cut out of $\pi_{\underline{x}*}\mathbf{1}$ and $(\pi^{\underline{z}})_{\ast}\mathbf{1}$ by certain idempotent endomorphisms $e_x$ and $e^z$ respectively, and these idempotents lie in the image of the cycle class map by Proposition \ref{prop:geo is parity}. The external product of these idempotents can be pulled back to the diagonal to yield an idempotent endomorphism $e_x^z$ of $(\pi_{\underline{x}}^{\underline{z}})_\ast\mathbf{1}$ by proper base change:
\[\begin{tikzcd}
    \Delta^*(\pi_{\underline{x}\ast}\mathbf{1}\boxtimes (\pi^{\underline{z}})_{*}\mathbf{1})\arrow[r,"\sim"]\arrow[d,"\Delta^*(e_x\boxtimes e^z)"]&\Delta^*(\pi_{\underline{x}}\times \pi^{\underline{z}})_*\mathbf{1}\arrow[d]\arrow[r,"\sim"]&(\pi_{\underline{x}}^{\underline{z}})_\ast\mathbf{1}\arrow[d,"e^z_x"]\\
    \Delta^*(\pi_{\underline{x}\ast}\mathbf{1}\boxtimes (\pi^{\underline{z}})_*\mathbf{1})\arrow[r,"\sim"]&\Delta^*(\pi_{\underline{x}}\times \pi^{\underline{z}})_*\mathbf{1}\arrow[r,"\sim"]&(\pi_{\underline{x}}^{\underline{z}})_\ast\mathbf{1}.
\end{tikzcd}\]
For the moment, let us assume that this pulled back idempotent $e^z_x$ is in the image of the cycle class map. From the characterisation of $\mathscr{E}(X^z_x)$ in Proposition \ref{prop:char of geo exts}, we conclude that $\mathscr{E}(X^z_x)$ is a summand of $\mathscr{E}(X_x)\otimes \mathscr{E}(X^z)$. As we know their stalks are equal by Proposition \ref{prop:stalks agree}, this split inclusion is an isomorphism, and renormalising then yields the result.
\\
\par
The remainder of this proof is showing that this idempotent is in the image of the cycle class map. The reason for this is that we may recognise this pullback as the refined intersection with the fundamental class of $\mathcal{F} = G/B$. That is, the following diagram commutes, where the horizontal maps are the realisation maps from cycles to (degree zero) endomorphisms of sheaves:
\[\begin{tikzcd}
    \CH_{|x|+|w_0 z|}((X_{\underline{x}}\times_{\mathcal{F}}X_{\underline{x}})\times (X^{\underline{z}}\times_{\mathcal{F}}X^{\underline{z}}))\arrow[r]\arrow[dd,"\cap \Delta \mathcal{F}"]&\End^0(\pi_{\underline{x}\ast}\mathbf{1}\boxtimes (\pi^{\underline{z}})_*\mathbf{1})\arrow[d,"\Delta^*"]\\
    &\End^0(\Delta^*(\pi_{\underline{x}\ast}\mathbf{1}\boxtimes (\pi^{\underline{z}})_*\mathbf{1}))\arrow[d,"\sim"]\\
    \CH_{|x|-|z|}(X_{\underline{x}}^{\underline{z}}\times_{\mathcal{F}}X_{\underline{x}}^{\underline{z}})\arrow[r]&\End^0((\pi_{\underline{x}}^{\underline{z}})_*\mathbf{1}).
\end{tikzcd}\]
\par 
As we are not aware of this exact compatibility in the literature, we include a proof.
To check that this diagram\footnote{Suppressing Tate twists for clarity} commutes, we first need that taking refined intersection commutes with cycle class maps:
\[\begin{tikzcd}
    \CH_{|x|+|w_0z|+\bullet}((X_{\underline{x}}\times_{\mathcal{F}}X_{\underline{x}})\times (X^{\underline{z}}\times_{\mathcal{F}}X^{\underline{z}}))\arrow[r]\arrow[d,"\cap \Delta \mathcal{F}"]&\hH^{\text{BM}}_{2(|x|+|w_0z|+\bullet)}((X_{\underline{x}}\times_{\mathcal{F}}X_{\underline{x}})\times (X^{\underline{z}}\times_{\mathcal{F}}X^{\underline{z}}))\arrow[d,"\cap \Delta \mathcal{F}"]\\
    \CH_{|x|-|z|+\bullet}(X_{\underline{x}}^{\underline{z}}\times_{\mathcal{F}}X_{\underline{x}}^{\underline{z}})\arrow[r]&\hH^{\text{BM}}_{2(|x|-|z|+\bullet)}(X_{\underline{x}}^{\underline{z}}\times_{\mathcal{F}}X_{\underline{x}}^{\underline{z}}).
\end{tikzcd}\]
This follows from the construction of the refined intersection map, using the Gysin maps as in \cite{F84}, which are compatible in the \'{e}tale realisation, see \cite[\S23]{Mi80}. 
We may view these Borel-Moore homology groups as the cohomology of the respective dualising sheaves. With respect to a pullback diagram
\[\begin{tikzcd}
W\cap Z\arrow[r,"i'"]\arrow[d,"f'"]&W\arrow[d,"f"]\\
Z\arrow[r,"i"]&X
\end{tikzcd}\] where $Z,X$ are smooth, and $i$  is a closed immersion of codimension $c$, we may describe the intersection product six functorially as the following composition:
\[\begin{tikzcd}
    \hH_{-\bullet}^{\text{BM}}(W)\arrow[rr,"\cap Z"]\arrow[d,"\simeq"]&& \hH_{-\bullet+2c}^{\text{BM}}(W\cap Z)\\
    \hom^{\bullet}(\mathbf{1}_W,\omega_W)\arrow[d]&& \hom^\bullet(\mathbf{1}_{W\cap Z},\omega_{W\cap Z}(2c)\{c\})\arrow[u,"\simeq"]\\
    \hom^{\bullet}((i')^*\mathbf{1}_W,(i')^*f^!\omega_X)\arrow[r]& \hom^\bullet(\mathbf{1}_{W\cap Z},(f')^!i^*\omega_X)\arrow[r,"\simeq"]& \hom^\bullet(\mathbf{1}_{W\cap Z},f^!i^*\mathbf{1}_X(2d_X)\{d_X\})\arrow[u,"\simeq"].
\end{tikzcd}\]
This allows us to reduce to a formal compatibility of functors, evaluated at constant and dualising sheaves. With respect to a pullback cube 
\[\begin{tikzcd}
Z\times_X W_1\times_X W_2 \arrow[rr,"\hat{f}"]\arrow[dr,"\hat{k}"]\arrow[dd,"i_{12}"]&&Z\times_X W_2\arrow[dd,"i_2",near end]\arrow[dr,"k'"]\\
&Z\cap W_1\arrow[rr,"f'",near end]\arrow[dd,"i_1",near end]&&Z\arrow[dd,"i"]\\
W_1\times_X W_2\arrow[rr,"\tilde{f}",near end]\arrow[dr,"\tilde{k}"]&&W_2\arrow[dr,"k"]\\
&W_1\arrow[rr,"f"]&&X
\end{tikzcd}\]
the compatibility is then the commutativity of the following six functorial natural transformations: 
\begin{equation}\label{Eqn:commuting diagram}
\begin{tikzcd}
\hom((\tilde{k})^*,(\tilde{f})^!)\arrow[r]\arrow[d]&\hom(f_!,k_*)\arrow[d]\\
\hom((i_{12})^*(\tilde{k})^*,(i_{12})^*(\tilde{f})^!)\arrow[d]&\hom(i^*f_!,i^*k_*)\arrow[d]\\
\hom((\hat{k})^*(i_1)^*,(\hat{f})^!(i_2)^*)\arrow[r]&\hom((f')_!(i_1)^*,(k')_*(i_2)^*).
\end{tikzcd}
\end{equation}
In our case we take $X=\mathcal{F}\times \mathcal{F}$, $Z=\Delta(\mathcal{F})$, $W_1=W_2=X_{\underline{x}}\times X^{\underline{z}}$. 
A proof of commutativity of (\ref{Eqn:commuting diagram}) may be recovered from the following diagram, where the commutativity of the individual regions are standard, and all maps are the canonical ones:
\[\begin{tikzcd}
(\tilde{k})_*(\tilde{f})^! \arrow[rd] \arrow[dd] \arrow[rr] &                                  & f^!k_* \arrow[d] \arrow[r] & f^!i_*i^*k_* \arrow[d] \\
                                        & (i_1)_*(i_1)^*(\tilde{k})_*(\tilde{f})^! \arrow[r] \arrow[d] & (i_1)_*(i_1)^*f^!k_* \arrow[r]     & (i_1)_*(f')^!i^*k_* \arrow[d] \\
(\tilde{k})_*(i_{12})_*(i_{12}^*)(\tilde{f})^! \arrow[r]                  & (i_1)_*(\hat{k})_*(i_{12})^*(\tilde{f})^! \arrow[r]           & (i_1)_*(\hat{k})_*(\hat{f})^!(i_2)^*) \arrow[r]     & (i_1)_*(f')^!(k')_*(i_2)^*     .     
\end{tikzcd}\]
One may also conclude this diagram (\ref{Eqn:commuting diagram}) commutes by the general commutativity Theorem 3.1.1 of \cite[\S3]{Hon25}.

\end{proof}

%==========================================================================

%============================== BIBLIOGRAPHY ==============================

%==========================================================================
{\footnotesize

}
%==========================================================================

\Addresses

\end{document}